\newtheorem{theorem}{Theorem}[section]
\newtheorem{corollary}{Corollary}[section]
\newtheorem{lemma}{Lemma}[section]
\newtheorem{remark}{Remark}[section]
\title{\LARGE \bf
GuSTO: Guaranteed Sequential Trajectory Optimization \\ via Sequential Convex Programming
}
\author{Riccardo Bonalli, Abhishek Cauligi, Andrew Bylard, Marco Pavone 
\thanks{R. Bonalli, A. Cauligi, A. Bylard, and M. Pavone are with the Department of Aeronautics and Astronautics, Stanford University, Stanford, CA 94305. \{\tt rbonalli, acauligi, bylard, pavone\} {\tt@stanford.edu}.}
\thanks{This work was supported in part by NASA under the Space Technology Research Program, NASA Space Technology Research Fellowship Grants NNX16AM78H and NNX15AP67H, Early Career Faculty Grant NNX12AQ43G and Early Stage Innovations Grant NNX16AD19G, and by KACST.}
}
\begin{document}

\maketitle
  \thispagestyle{empty}
\pagestyle{empty}

\begin{abstract}
Sequential Convex Programming (SCP) has recently seen a surge of interest as a tool for 
trajectory optimization. However, most available methods lack rigorous performance guarantees and they are often tailored to specific optimal control setups. In this paper, we present GuSTO (Guaranteed Sequential Trajectory Optimization), an algorithmic framework to solve trajectory  optimization problems for control-affine systems with drift. GuSTO generalizes earlier SCP-based methods for trajectory optimization (by addressing, for example, goal-set constraints and problems with either fixed or free final time) and enjoys theoretical convergence guarantees in terms of convergence to, at least, a stationary point. The theoretical analysis is further leveraged to devise an accelerated implementation of GuSTO, which originally infuses ideas from indirect optimal control into an SCP context. Numerical experiments on a variety of trajectory optimization setups show that GuSTO generally outperforms current state-of-the-art approaches in terms of success rates, solution quality, and computation times.
\end{abstract}

\section{Introduction}
\label{sec:introduction}

Trajectory optimization algorithms play a key role in robot motion planning, either being applied directly to solve planning problems or being used to refine coarse trajectories generated by other methods. A wide variety of algorithmic frameworks have been proposed \cite{SchulmanDuanEtAl2014,LiuLu2014,MaoSzmukEtAl2016,DinhDiehl2010,VerscheureDemeulenaereEtAl2009, RatliffZuckerEtAl2009,KalakrishnanChittaEtAl2011,Betts1998}, 
and though they have had success on a broad class of robotic systems, a large gap remains in establishing practical guidelines for applying trajectory optimization to new systems and problem setups, placing guarantees on their behavior, and fully exploiting optimal control theory to improve performance.

In particular, additional work is required to achieve more general, well-analyzed frameworks for trajectory optimization algorithms which meet the following key desiderata:

\begin{enumerate}
\item {\em High computational speed:} Even on high-dimensional systems having complex dynamics and constraints, trajectory optimization algorithms should converge rapidly, allowing quick responses to commands and rapid replanning in uncertain or changing environments.

\item {\em Theoretical guarantees:} A reliable framework hinges on strong theoretical guarantees. Specifically, trajectory optimization algorithms should (i) guarantee initialization-independent convergence to, at least, a stationary point, (ii) ensure hard enforcement of dynamical constraints, especially as many robotic systems are nonholonomic, and (iii) provide that these guarantees are discretization-independent, since some robotic systems may call for specific numerical schemes.

\item {\em Generality:} Trajectory optimization frameworks should be broadly applicable to different robot motion planning problems, including involving complex robotic systems (e.g., nonconvex, nonholonomic dynamics, drift systems, etc.), flexible problem setups (e.g., free final time, goal sets, etc.), and diverse initialization strategies.

\end{enumerate}

\begin{figure}[t!]
\centering
\includegraphics[width=0.95\columnwidth]{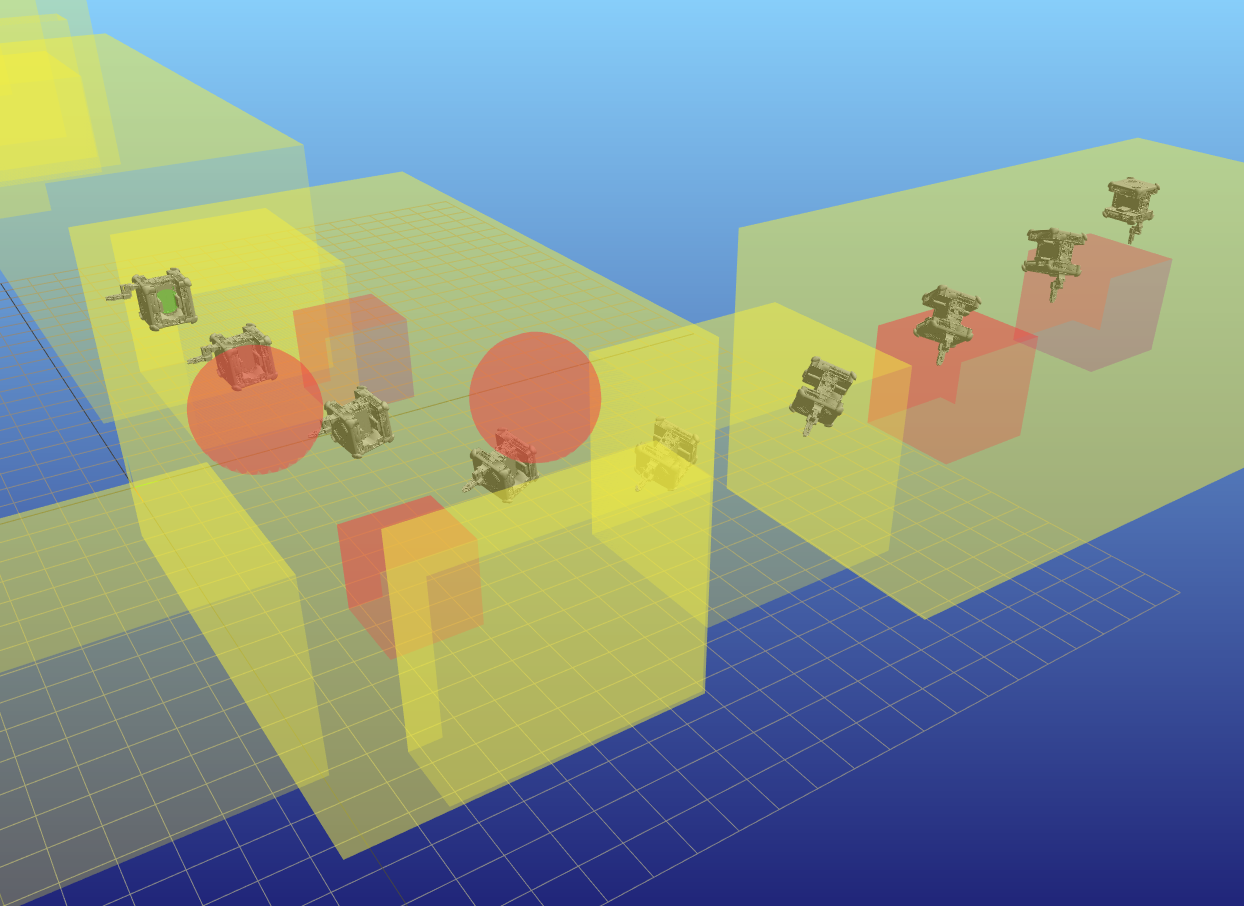}
\caption{GuSTO used to generate a dynamically-feasible, collision-free trajectory for the Astrobee free-flying spacecraft robot using a simple straight-line initialization \cite{SmithBarlowEtAl2016}.}
\label{fig:free_flyer_mp_scp}
\end{figure}

{\em Related work:} The trajectory optimization spectrum can be divided into global search methods and local methods. Global search methods include motion planning techniques, such as  asymptotically optimal sampling-based motion planning (SBP) algorithms (e.g., RRT$^*$, PRM$^*$, and FMT$^*$) \cite{LaValleKuffner2000,KavrakiSvestkaEtAl1996,JansonSchmerlingEtAl2015}. Though these require no initialization, they scale poorly to high-dimensional systems with kinodynamic constraints. For such systems, SBP techniques require enormous computational time and are thus instead used in practice to initialize other trajectory optimization algorithms.

Local methods include indirect methods, in particular including shooting methods \cite{Betts1998}. Built on an efficient coupling of necessary conditions of optimality, such as the Pontryagin Maximum Principle \cite{Pontryagin1987}, and Newton's methods, these have the fastest convergence rate, but they are highly sensitive to initialization and are thus difficult to apply to different tasks.
Another class of efficient local procedures is direct methods. One of these, which is the focus of this paper, is sequential convex programming (SCP), a framework which has been quite successful in the robotics community \cite{AugugliaroSchoelligEtAl2012,SchulmanDuanEtAl2014,LiuLu2014,MaoSzmukEtAl2016,MorganChungEtAl2014,Virgili-llopZagarisEtAl2017}. SCP successively convexifies the costs and constraints of a nonconvex optimal control problem, seeking a solution to the original problem through a series of convex problems \cite{FleuryBraibant1986,BoydVandenberghe2004}. Examples include TrajOpt \cite{SchulmanDuanEtAl2014}, Liu, et al. \cite{LiuLu2014}, and Mao, et al. \cite{MaoSzmukEtAl2016}. However, these suffer a number of deficiencies, as summarized in Table \ref{table:Approaches}. For example, \mbox{Traj}Opt provides high speed and broad applicability to robotic systems, but the penalization of dynamical constraints and the missing development of convergence guarantees preclude exact feasibility and numerical robustness, respectively. Similarly, \cite{LiuLu2014} only holds for a particular time-discretization, and though the approach in \cite{MaoSzmukEtAl2016} is discretization-indepedent, it cannot ensure hard enforcement of dynamics. Further, its convergence analysis relies on complex Lagrange multipliers from which it is difficult to extract numerically useful information, a key capability exploited in part in our work. Finally, in most of these works, extensions to free final time and goal-set constraints are not addressed. 

One last family of widespread procedures in trajectory optimization is variational methods. These include deterministic covariant approaches such as CHOMP \cite{RatliffZuckerEtAl2009} and probabilistic gradient descent approaches such as STOMP \cite{KalakrishnanChittaEtAl2011}. Similar to SCP, these do not necessarily require high-quality initializations, but theoretical guarantees are not easy to provide. Indeed, CHOMP does not incorporate the dynamical evolution of a system, while STOMP can only account for constraints through direct penalization, preventing hard enforcement of dynamics. Thus, convergence guarantees are not provided in both approaches.

\newcolumntype{C}{>{\centering\arraybackslash}p{1.65cm}}
\newcommand{\Centerstackmod}[1]{\addstackgap[3pt]{\Centerstack{#1}}}
\setstackgap{S}{1pt}

\begingroup
\begin{table*}[t]
\scriptsize
\centering

\hspace{3pt}\begin{tabular}{ C | C | C | C | C | C | C |}
\cline{2-7}
& \Centerstackmod{Hard enforcement {of dynamics} constraints}
& \Centerstackmod{Continuous-time convergence guarantees}
& \Centerstackmod{Independent {of time} discretization}
& \Centerstackmod{Free {final time}}
& \Centerstackmod{{Goal-set} constraint}
& \Centerstackmod{Provides {dual solution that} {can warm-start} {shooting method}} \\ \hhline{~|======|}
\end{tabular}

\begin{tabular}{| C || C | C | C | C | C | C |}
\hline
\addstackgap[3pt]{Optimal SBP} & $\bullet$ & $\bullet$ & $\bullet$ & $\bullet$ & $\bullet$ &  \\
\hline
\addstackgap[3pt]{TrajOpt \cite{SchulmanDuanEtAl2014}} & & & & & & \\
\hline
\addstackgap[3pt]{Liu, et al. \cite{LiuLu2014}} & $\bullet$ & & & $\bullet$ & $\bullet$ & \\
\hline
\addstackgap[3pt]{Mao, et al. \cite{MaoSzmukEtAl2016}} & & $\bullet$ & $\bullet$ & & & \\
\hline
\addstackgap[3pt]{CHOMP \cite{RatliffZuckerEtAl2009}} & & & $\bullet$ & & & \\
\hline
\addstackgap[3pt]{STOMP \cite{KalakrishnanChittaEtAl2011}} & & & & & & \\
\hline
\addstackgap[3pt]{This Work} & $\bullet$ & $\bullet$ & $\bullet$ & $\bullet$ & $\bullet$ & $\bullet$  \\

\hline
\end{tabular}
\caption{Comparison with existing trajectory optimization schemes.}
\label{table:Approaches}
\vspace{-15pt}
\end{table*}
\endgroup

{\em Statement of Contributions:} To begin to fill these gaps, our main contributions in this paper are as follows: First, we introduce Guaranteed Sequential Trajectory Optimization (GuSTO), an SCP-based algorithmic framework for trajectory optimization. More precisely, we provide a generalized continuous-time SCP scheme applied to drift control-affine nonlinear dynamical systems subject to control and state constraints (including collision-avoidance) and goal-set constraints, guaranteeing dynamic feasibility with either fixed or free final time. Second, we provide a theoretical analysis for this framework, proving that that the limiting solution of our continuous-time scheme is a stationary point in the sense of the Pontryagin Maximum Principle \cite{Pontryagin1987}. This generalizes the work in \cite{MaoSzmukEtAl2016} for control-affine systems and introduces stronger theoretical guarantees than the current state-of-the-art. Moreover, the generality of our framework enables these guarantees to be independent of the chosen time discretization scheme and the method used to find a solution at each SCP iteration. Indeed, the framework is broadly applicable to many different robot motion planning and trajectory optimization problems, which then enjoy the same guarantees. This analysis is further leveraged to accelerate convergence by initializing shooting methods with the dual solutions of SCP iterations. Third, we provide practical guidelines based on our analysis, including proper handling of constraints and initialization strategies. Moreover, we demonstrate the framework through numerical and hardware experiments, provide comparison to other approaches, and provide a Julia library for our trajectory optimization framework.

To the best of our knowledge, our framework uniquely meets all three aforementioned desiderata, rapidly providing theoretically desirable trajectories for a broad range of robotic systems and problem setups (see Table \ref{table:Approaches} for a comparison to some existing approaches).

\section{Problem Formulation and Overview of SCP}
\label{sec:formulation}
We begin by reviewing the optimal control problem of interest in Section \ref{subsec:ocp} and then provide an overview of an SCP framework for trajectory optimization in \ref{subsec:scp}.

\subsection{Trajectory Optimization as an Optimal Control Problem}
\label{subsec:ocp}

Given a fixed initial point $\bar{x}_0 \in \mathbb{R}^n$ and a final goal set $M_f \subseteq \mathbb{R}^n$, for every final time $t_f > 0$, we model our dynamics as a drift control-affine system in $\mathbb{R}^n$ of the form
\begin{eqnarray} \label{ref:AffineDynamics}
\begin{cases}
\displaystyle \dot{x}(t) = f(x(t),u(t)) = f_0(x(t)) + \sum_{i=1}^{m} u^i(t) f_i(x(t))\medskip \\
x(0) = \bar{x}_0 \quad , \quad x(t_f) \in M_f \quad \textnormal{s.t.} \;\; \textnormal{dist}(\bar{x}_0,M_f) > 0
\end{cases}
\end{eqnarray}
where $f_i : \mathbb{R}^n \rightarrow \mathbb{R}^n$, $i = 0,\dots,m$ are $C^1$ vector fields, and $\textnormal{dist}(x,A) = \inf_{y \in A} \|x-y\|_2$.

In this context, we design trajectory optimization as an optimal control problem with penalized state constraints. More specifically, we consider the Optimal Control Problem (\textbf{OCP}) consisting of minimizing the integral cost
\begin{equation} \label{ref:Cost}
\begin{split}
J(t_f, \: &x,u) = \int_{0}^{t_f} f^0(x(t),u(t)) \; \mathrm{d}t = \\
& \int_{0}^{t_f} \left( \| u(t) \|^2_R + u(t) \cdot f^0(x(t)) + g(x(t))\right) \; \mathrm{d}t
\end{split}
\end{equation}
under dynamics \eqref{ref:AffineDynamics}, among all the controls $u \in L^{\infty}([0,t_f],\mathbb{R}^m)$ satisfying $u(t) \in U$ almost everywhere in $[0,t_f]$. Here, $f^0 : \mathbb{R}^n \rightarrow \mathbb{R}^m$, $g : \mathbb{R}^n \rightarrow \mathbb{R}$ are $C^1$, $\| \cdot \|_R$ represents the norm that is given by a constant positive-definite matrix $R \in \mathbb{R}^{m \times m}$, and $U \subseteq \mathbb{R}^m$ provides control constraints. The final time $t_f$ may be free or fixed, and hard enforcement of dynamical and goal-set constraints are naturally imposed by \eqref{ref:AffineDynamics}. Function $g = \bar g_1 + \omega \bar g_2$ sums up the contributions of the state-depending terms $\bar g_1$ of the cost unrelated to constraints and of the sum of state constraint violations $\bar g_2$ (e.g., collision-avoidance violation), where $\omega \ge 1$ is a penalization weight. We stress that penalizing state constraints is fundamental to obtaining theoretical guarantees in the sense of the classical Pontryagin Maximum Principle \cite{Pontryagin1987} (see Theorem \ref{ref:theoSCP}), stronger than standard Lagrange multiplier rules. However, in Sec. \ref{sec:algorithmguarantees}, we provide an algorithm under this formulation which can still enforce hard state constraints up to some chosen tolerance $\varepsilon \geq 0$. 

\subsection{Sequential Convex Programming}
\label{subsec:scp}
We proceed by applying sequential convex programming to solve our optimal control problem. Under the assumption that $U$ is convex, SCP consists of iteratively linearizing the nonlinear contributions of (\textbf{OCP}) around local solutions, thus recursively defining a sequence of simplified problems. More specifically, for a given $t^0_f > 0$, assume we have some continuous curve $x_0 : [0,t^0_f] \rightarrow \mathbb{R}^n$ and some control law $u_0 : [0,t^0_f] \rightarrow \mathbb{R}^m$, continuously extended in the interval $(0,+\infty)$. Defined inductively, at iteration $k+1$, the Linearized Optimal Control Problem (\textbf{LOCP})$_{k+1}$ consists of minimizing the new integral cost
\begingroup
\footnotesize
\begin{equation} \label{ref:Cost_k}
\begin{split}
& J_{k+1}(t_f, x, u) = \int_{0}^{t_f} f^0_{k+1}(t,x(t),u(t)) \; \mathrm{d}t = \\
& \int_{0}^{t_f} \left( \| u(t) \|^2_R + h_k\left( \| x(t) - x_k(t) \|^2 - \Delta_k \right) \right) \; \mathrm{d}t \; + \\
& \int_{0}^{t_f} u(t) \cdot \left( f^0(x_k(t)) + \frac{\partial f^0}{\partial x}(x_k(t)) \cdot (x(t) - x_k(t)) \right) \; \mathrm{d}t \; + \\
& \int_{0}^{t_f} \left( g_k(x_k(t)) + \frac{\partial g_k}{\partial x}(x_k(t)) \cdot (x(t) - x_k(t)) \right) \; \mathrm{d}t
\end{split}
\end{equation}
\endgroup
where $g_k = \bar g_1 + \omega_k \bar g_2$ and $h_k(s)$ is any smooth approximation of $\omega_k \max\{0,s\}$ \cite[Ch. 10]{Lee2003}, under the new dynamics
\begingroup
\footnotesize
\begin{eqnarray} \label{ref:AffineDynamics_k}
\begin{cases}
\dot{x}(t) = f_{k+1}(t,x(t),u(t)) = \medskip \\
\displaystyle \quad \left( f_0(x_k(t)) + \sum_{i=1}^{m} u^i(t) f_i(x_k(t)) \right) + \medskip \\
\displaystyle \quad \left( \frac{\partial f_0}{\partial x}(x_k(t)) + \sum_{i=1}^{m} u^i_k(t) \frac{\partial f_i}{\partial x}(x_k(t)) \right) \cdot (x(t) - x_k(t)) \medskip \\
x(0) = \bar{x}_0 \quad , \quad x(t_f) \in M_f
\end{cases}
\end{eqnarray}
\endgroup
coming from the linearization of nonlinear vector fields, among all controls $u \in L^{\infty}([0,t_f],\mathbb{R}^m)$ satisfying $u(t) \in U$ almost everywhere in $[0,t_f]$, where $(t^k_f,x_k,u_k)$ is a solution for the linearized problem at the previous iteration, i.e. (\textbf{LOCP})$_{k}$, continuously extended in the interval $(0,+\infty)$. As a result of the functions $h_k \in C^{\infty}$, we can provide trust-region-type constraints on state trajectories using uniformly bounded scalars $0 \le \Delta_k \le \Delta_0$ and weights $1 \le \omega_0 \le \omega_k \le \omega_{\max}$ (no such bounds are considered on controls because $u$ appears linearly), which at the same time penalize state constraints violations $\bar g_2$. Here, the user may make vary $\Delta_k$ and $\omega_k$ at each iteration --- these are used merely to ease the search for a solution of (\textbf{LOCP})$_{k+1}$. Problem (\textbf{LOCP})$_{1}$ is linearized around an initializing couple $(x_0,u_0)$, and this initialization curve should be as close as possible to a feasible or even optimal curve for (\textbf{LOCP})$_{1}$, although we do not require that $(x_0,u_0)$ is feasible for (\textbf{OCP}).

The sequence of problems (\textbf{LOCP})$_{k}$ is correctly defined if, for each iteration $k \ge 1$, an optimal solution for (\textbf{LOCP})$_{k}$ exists. For this, we consider the following assumptions:

\begin{itemize}
\item[$(A_1)$] The set $U$ is compact and convex, while the set $M_f$ is a compact submanifold (either with or without boundary).
\item[$(A_2)$] Mappings $f^0$, $g$, vector fields $f_i$, $i = 0,\dots,m$ and their differentials have compact supports.
\item[$(A_3)$] At every iteration $k \ge 1$, problem (\textbf{LOCP})$_{k}$ is feasible. Moreover, for free final time problems, there exists a constant $b > 0$ such that, every feasible tuple $(t_f,x,u)$ for (\textbf{LOCP})$_{k}$ satisfies $t_f \le b$, for every iteration $k \ge 1$.
\end{itemize}

\noindent Under these assumptions, classical existence Filippov-type arguments \cite{Filippov1962,LeeMarkus1967} show that at each iteration $k \ge 1$, the problem (\textbf{LOCP})$_{k}$ has at least one optimal solution. Here, some comments are in order. Assumption $(A_2)$ is not limiting and can be easily satisfied by multiplying all noncompliant maps by smooth cut-off functions having supports contained in the working space. Moreover, it is standard in control theory to assume time-bounded strategies, and we can satisfy $(A_3)$ by simply considering the notion of virtual control \cite{MaoSzmukEtAl2016}. Indeed, we stress the fact that most trajectory optimization applications effortlessly satisfy Assumptions $(A_1)$-$(A_3)$.

\section{GuSTO: Algorithm Overview\\ and Theoretical Analysis}
\label{sec:algorithmguarantees}
In this section, we present the algorithmic details for GuSTO in \ref{subsec:generalized_scp} and discuss its theoretical convergence guarantees to a stationary point in \ref{subsec:convergence_guarantees}.

\subsection{Generalized SCP Algorithm}
\label{subsec:generalized_scp}

SCP aims to solve (\textbf{OCP}) by iteratively seeking solutions to (\textbf{LOCP})$_{k}$. For this process, the progression through iterations must be designed carefully in order to achieve numerical efficiency and fast computation. Supported by classical approaches \cite{NocedalWright2006} and more recent results in SCP for robot trajectory optimization \cite{SchulmanDuanEtAl2014,MaoSzmukEtAl2016}, we propose a new general SCP scheme, named GuSTO (Guaranteed Sequential Trajectory Optimization), to solve (\textbf{OCP}), as reported in Algorithm \ref{ref:algoSCP}. Our main novelties are: 1) a time-continuous, broadly applicable setup ensuring convergence to a stationary point in the sense of the Pontryagin Maximum Principle \cite{Pontryagin1987} (see Corollary \ref{ref:corollSCP}), 2) hard enforcement of dynamical constraints and ease in considering free final time and goal-set problems, 3) a refined trust-region radius adaptation step based on a new model accuracy ratio which provides a definition of relative error between iterations and prevents the algorithm from becoming stuck in a cycle within its loops, 4) a theoretically justified stopping criterion based on closeness between iterated solutions. \vspace{3pt}

\SetKwInOut{Input}{Input}
\SetKwInOut{Output}{Output}
\SetKwInOut{Data}{Data}

\begin{algorithm}
\caption{GuSTO} \label{ref:algoSCP}
\Input{Trajectory $x_0$ and control $u_0$ defined in $(0,\infty)$.}
\Output{Solution $(x_k,u_k)$ for (\textbf{LOCP})$_{k}$ at iteration $k$.}
\Data{State constraints data $\Delta_0 > 0$, $\omega_0 \ge 1$, $\varepsilon \ge 0$;

\qquad \qquad Trust region scaling parameters $0 < \beta_{\textrm{fail}} < 1$,

\qquad \qquad $\beta_{\textrm{succ}} > 1$, $0<\rho_{0}<\rho_{1}<1$, $\gamma_{\textrm{fail}} > 1$.}

\Begin
{
    $k = 0$\\
    \While{$(t^{k+1}_f,x_{k+1},u_{k+1}) \neq (t^{k}_f,x_{k},u_{k})$ and $\omega_{k+1} \le \omega_{\max}$}
    {
      Solve (\textbf{LOCP})$_{k+1}$ for $(t^{k+1}_f,x_{k+1},u_{k+1})$\\
      \If{$\| x_{k+1} - x_{k} \|^2(\cdot) \le \Delta_k$}{
            Calculate model accuracy ratio $\rho_{(k)}$ in (\ref{ref:trust_region_performance})\\
            \If{$\rho_{(k)} > \rho_{1}$} {
                Reject solution $(t^{k+1}_f,x_{k+1},u_{k+1})$\\
                $\Delta_{k+1} \gets \beta_{\textrm{fail}}\Delta_k$ \ , \ $\omega_{k+1} \gets \omega_k$\\
            }
            \Else{
                Accept solution $(t^{k+1}_f,x_{k+1},u_{k+1})$\\
                \begingroup
                \small
                $\Delta_{k+1} \gets \begin{cases} \min\{\beta_{\textrm{succ}} \Delta_k , \Delta_0\} & \rho_{(k)}<\rho_{0}\\ \Delta_k & \rho_{(k)} \geq \rho_{0} \end{cases}$\\
                $\omega_{k+1} \gets \begin{cases} \omega_0 & \bar g_2(x_{k+1}(\cdot)) \le \varepsilon\\ \gamma_{fail} \omega_k & \bar g_2(x_{k+1}(\cdot)) > \varepsilon \end{cases}$\\
                \endgroup
            }
      }
      \Else{
            Reject solution $(t^{k+1}_f,x_{k+1},u_{k+1})$\\
            $\Delta_{k+1} \gets  \Delta_{k}$ \ , \ $\omega_{k+1} \gets \gamma_{fail} \omega_k$\\
        }
        $k \gets k+1$\\
    }
    \Return{$(t^{k}_f,x_{k},u_{k})$}
}
\end{algorithm}

Once (\textbf{LOCP})$_{k+1}$ is solved at some iteration $k$ (line \textbf{4}), we first check whether hard trust-region constraints are satisfied. In the positive case, we evaluate the ratio
\begingroup
\scriptsize
\begin{equation} \label{ref:trust_region_performance}
\begin{split}
& \rho_{(k)} = N_{k} / D_{k} = \bigg( | J(t^{k+1}_f,x_{k+1},u_{k+1}) - J_{k+1}(t^{k+1}_f,x_{k+1},u_{k+1}) | + \\
& \int_{0}^{t^{k+1}_f} \| f(x_{k+1}(t),u_{k+1}(t)) - f_{k+1}(t,x_{k+1}(t),u_{k+1}(t)) \| \; \mathrm{d}t \bigg) \Big/ \\
& \left( | J_{k+1}(t^{k+1}_f,x_{k+1},u_{k+1}) | + \int_{0}^{t^{k+1}_f} \| f_{k+1}(t,x_{k+1}(t),u_{k+1}(t)) \| \; \mathrm{d}t \right)
\end{split}
\end{equation}
\endgroup
which represents the relative error between the original cost/dynamics and their convexified versions. If this error is greater than some given tolerance, the linear approximation is too coarse and we reject the new solution, shrinking the trust region (lines \textbf{7}-\textbf{9}). Otherwise, we accept and update the trust region radius (lines \textbf{11}-\textbf{12} \cite{MaoSzmukEtAl2016}). Moreover, in the case that hard-penalized state constraints are not satisfied, we increase the value of the weight $\omega_k$ (line \textbf{13}), pushing the solver to seek constraint satisfaction (up to some threshold $\varepsilon \ge 0$) at the next iteration. On the other hand, when only soft trust-region constraints are satisfied, we increase the weight $\omega_k$ while maintaining the same radius $\Delta_k$ (lines \textbf{15}-\textbf{16}), pushing the solver to look for solutions that satisfy the trust-region constraints. The algorithm ends when successive iterations reach an identical solution or when the state constraint weight is greater than the maximum value $\omega_{\max}$.

\begin{remark} \label{ref:remarkAlgoSCP}
As a result of Assumptions $(A_1)$-$(A_3)$, we have $N_k \le \omega_k \tilde C \| x_{k+1} - x_{k} \|_{C^0}$, where $\tilde C \ge 0$ is some constant depending only on quantities defining (\textbf{OCP}). Moreover, since every solution $(t^{k}_f,x_{k},u_{k})$ satisfies the initial and final conditions for \eqref{ref:AffineDynamics}, it holds that $0 < \textnormal{dist}(\bar{x}_0,M_f) \le \| x_{k+1}(t^{k+1}_f) - x_{k+1}(0) \| \le D_k$. Therefore, since $\omega_k \le \omega_{\max}$, it is easily seen that Algorithm \ref{ref:algoSCP} never becomes stuck in the rejection step provided by lines \textbf{7}-\textbf{9}.
\end{remark}

No assumption on the initializing strategy $(x_0,u_0)$ is taken. From a practical point of view, this allows us to initialize GuSTO with simple, even infeasible, guesses for solutions of (\textbf{OCP}), such as a straight line in the state space. In this case, a suitable choice of the maximal value of the trust region radius $\Delta_0$ may be crucial to allow the method to correctly explore the space if the provided initialization is far from any optimal strategy. Finally, increasing the value of weights $\omega_k$ at line \textbf{3} of GuSTO eases the search for solutions satisfying state constraints up to the $\varepsilon$ tolerance.

\subsection{Theoretical Convergence Guarantees}
\label{subsec:convergence_guarantees}
In this section, we prove that GuSTO has the property of guaranteed convergence to an extremal solution. This result is achieved by leveraging techniques from indirect methods in a direct method context, which is a contribution of independent interest discussed further in Section \ref{subsec:shooting_methods}.

The convergence of GuSTO can be inferred by adopting one further regularity assumption concerning (\textbf{LOCP})$_{k}$:

\begin{itemize}
\item[$(A_4)$] At every iteration $k \ge 1$ of SCP, every optimal control $u_k$ of (\textbf{LOCP})$_{k}$ is continuous.
\end{itemize}

\noindent We stress that although Assumption $(A_4)$ seems limiting, many control systems in trajectory optimization applications naturally satisfy it \cite{ChitourJeanEtAl2008}. Moreover, the normality of Pontryagin extremals is sufficient (under minor assumptions) to ensure that $(A_4)$ holds \cite{ShvartsmanVinter2006}.

Thus, in view of the Pontryagin Maximum Principle \cite{Pontryagin1987}, our main theoretical result for SCP is the following:

\begin{theorem}[] \label{ref:theoSCP}
Suppose that Assumptions $(A_1)$-$(A_4)$ hold. Given any sequence of trust region radii and weights $((\Delta_k,\omega_k))_{k \in \mathbb{N}} \subseteq [0,\Delta_0] \times [\omega_0,\omega_{\max}]$, let $((t^k_f,x_k,u_k))_{k \in \mathbb{N}}$ be any sequence such that for every $k \ge 1$, $(x_k,u_k)$ is optimal for (\textbf{LOCP})$_{k}$ in $[0,t^k_f]$. Up to some subsequence:
\begin{itemize}
\item $t^k_f \rightarrow \tilde t_f \in [0,b]$, for the strong topology of $\mathbb{R}$
\item $x_k \hspace{-4pt} \rightarrow \tilde x \in C^0([0,\tilde t_f],\mathbb{R}^n)$, for the strong topology of $C^0$
\item $u_k \rightarrow \tilde u \in L^{\infty}([0,\tilde t_f],U)$, for the weak topology of $L^2$
\end{itemize}
as $k$ tends to infinity, such that, $(\tilde x,\tilde u)$ is feasible for (\textbf{OCP}) in $[0,\tilde t_f]$. Moreover, there exists a nontrivial couple $(\tilde p,\tilde p^0)$ such that the tuple $(\tilde x,\tilde p,\tilde p^0,\tilde u)$ represents a Pontryagin extremal for (\textbf{OCP}) in $[0,\tilde t_f]$. In particular, as $k$ tends to infinity, up to some subsequence:
\begin{itemize}
\item $(p_k,p^0_k) \rightarrow (\tilde p,\tilde p^0)$ for the strong topology of $C^0 \times \mathbb{R}$
\end{itemize}
where $(x_k,p_k,p^0_k,u_k)$ is a Pontryagin extremal of (\textbf{LOCP})$_{k}$.  Finally, for fixed final time $t_f$ problems, we have $\tilde t_f = t_f$. 
\end{theorem}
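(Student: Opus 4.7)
The plan is to obtain the convergence via compactness, feasibility of the limit, and then a PMP-passage argument. First I would establish the three convergences stated in the theorem. The times $t_f^k\in[0,b]$ by $(A_3)$, so a convergent subsequence exists trivially. The controls take values in the compact set $U$ (Assumption $(A_1)$), so $(u_k)$ is bounded in $L^\infty$; extending by zero beyond $t_f^k$ and reasoning on $[0,b]$, Banach--Alaoglu yields weak-$L^2$ convergence to some $\tilde u$ with values a.e.\ in $U$ (convexity of $U$ plus Mazur's lemma). For the states, Assumption $(A_2)$ together with $u_k\in U$ bounds $\dot x_k$ uniformly via the linearized dynamics $f_{k+1}$, so $(x_k)$ is uniformly Lipschitz and uniformly bounded; Arzel\`a--Ascoli then delivers uniform convergence on $[0,\tilde t_f]$ to some $\tilde x\in C^0$. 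Assumption $(A_4)$ is not needed yet.

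Next I would prove that $(\tilde x,\tilde u)$ is feasible for (\textbf{OCP}). The key observation is that the linearization base point at iteration $k+1$ is precisely $x_k$, so the discrepancy $\|x_k-\tilde x\|_{C^0}$ tends to zero along the subsequence and the coefficients in \eqref{ref:AffineDynamics_k} converge uniformly to those in \eqref{ref:AffineDynamics}. Writing $x_{k+1}$ as the Volterra integral determined by $f_{k+1}$ and passing to the limit using strong convergence of $x_{k+1}$, uniform convergence of the linearization coefficients, and weak-$L^2$ convergence of $u_{k+1}$ (legitimate because each $f_i(x_k)$ and each $\tfrac{\partial f_i}{\partial x}(x_k)$ converges strongly in $C^0$ and multiplication of strong$\,\times\,$weak converges in the appropriate dual pairing) yields $\dot{\tilde x}=f(\tilde x,\tilde u)$ a.e., with $\tilde x(0)=\bar x_0$ and $\tilde x(\tilde t_f)\in M_f$ by closedness of $M_f$. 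State-constraint feasibility $\bar g_2(\tilde x)\equiv 0$ is enforced either via the stopping condition on $\omega_k$ (in the $\varepsilon$-tolerance sense of Algorithm \ref{ref:algoSCP}) or, if $\omega_k\to\omega_{\max}$ is hit, by absorbing the residual into the penalty cost of the (\textbf{OCP}) formulation.

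I would then upgrade this feasible accumulation point to a Pontryagin extremal. For each $k$, apply the PMP to the convex problem (\textbf{LOCP})$_{k+1}$ to obtain an adjoint pair $(p_{k+1},p_{k+1}^0)$, normalized so that $\|p_{k+1}(0)\|+|p_{k+1}^0|=1$, satisfying the linearized adjoint ODE, the maximum condition, and the transversality on $M_f$ (plus the free-final-time condition when applicable). The adjoint ODE is linear in $p_{k+1}$ with coefficients uniformly bounded under $(A_1)$--$(A_2)$ and the bound $\omega_k\le\omega_{\max}$ (which keeps $h_k'$ and $\partial g_k/\partial x$ uniformly bounded), so $(p_k)$ is uniformly bounded and equicontinuous on $[0,\tilde t_f]$. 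A further Arzel\`a--Ascoli extraction gives strong $C^0$ convergence $p_k\to\tilde p$, and compactness of $[-1,0]\ni p_k^0$ gives $p_k^0\to\tilde p^0$. Nontriviality of $(\tilde p,\tilde p^0)$ follows from the preserved normalization. Passing to the limit in the adjoint equation and transversality is routine from strong convergence of $x_k,p_k$ and uniform convergence of the linearization data to the true Jacobians. Passing to the limit in the maximum condition is where $(A_4)$ and the control-affine structure become essential: because $f$ and $f^0$ are affine in $u$, the pointwise maximized Hamiltonian is linear in $u$, so the maximum condition reduces to a pointwise inequality with continuous coefficients that passes to the limit even though $u_k$ only converges weakly, and Assumption $(A_4)$ identifies $\tilde u$ as the actual limit of continuous optimal controls.

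The main obstacle I anticipate is precisely this last passage: the linearized costs $J_{k+1}$ contain the trust-region term $h_k(\|x-x_k\|^2-\Delta_k)$ and the penalty $\omega_k\bar g_2$, whose contributions to the linearized Hamiltonian must vanish in the limit (the trust-region term because $\|x_{k+1}-x_k\|\to 0$ and $h_k'$ is supported where its argument is positive, the penalty term because the limit is state-feasible and $\omega_k$ stays bounded in the tolerance regime). One must track these carefully in the adjoint equation and show they drop out, leaving exactly the Hamiltonian and transversality conditions for (\textbf{OCP}). The fixed-final-time conclusion $\tilde t_f=t_f$ is then immediate since each $t_f^k=t_f$.
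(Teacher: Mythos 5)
Your first half is essentially the paper's argument: compactness of $(t^k_f)$ in $[0,b]$ via $(A_3)$, weak-$L^2$ convergence of the controls with the limit staying in $U$ by convexity and closedness, strong $C^0$ convergence of the states (you use equi-Lipschitz bounds plus Arzel\`a--Ascoli where the paper uses boundedness in $H^1$ and the compact embedding $H^1 \hookrightarrow C^0$, which is equivalent here), and the Volterra-integral passage to feasibility of $(\tilde x,\tilde u)$. That part is fine.

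The gap is in the multiplier step. You assert that ``because $f$ and $f^0$ are affine in $u$, the pointwise maximized Hamiltonian is linear in $u$,'' but $f^0$ is \emph{not} affine in $u$: the running cost \eqref{ref:Cost} contains $\|u\|_R^2$, so the Hamiltonian is quadratic (concave when $p^0<0$) in the control. More importantly, even for the genuinely linear-in-$u$ part, the maximality condition for (\textbf{LOCP})$_{k+1}$ is a pointwise a.e.\ identity evaluated at $u_{k+1}(t)$, and weak $L^2$ convergence of $u_{k+1}$ carries no pointwise information; your proposal never explains how the left-hand side $H_{k+1}(t,x_{k+1}(t),p_{k+1}(t),p^0_{k+1},u_{k+1}(t))$ converges. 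This is precisely where the paper takes a different route: it recasts the PMP through Pontryagin cones generated by needle variations \eqref{ref:localVarLOCP}--\eqref{ref:varLOCP} and passes the separation inequality \eqref{ref:variationalLOCP} to the limit by proving the cones themselves converge (Lemma \ref{ref:lemmaCone}). The engine behind that is Lemma \ref{ref:lemmaLebesgue}, which uses Assumption $(A_4)$ (continuity of the optimal controls) to manufacture, near every Lebesgue point $s$ of $\tilde u$, points $s_k$ at which $u_k(s_k)\to\tilde u(s)$ and $u_{k+1}(s_k)\to\tilde u(s)$ --- i.e., exactly the pointwise control convergence that weak convergence cannot supply and that needle variations require. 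Your proposal invokes $(A_4)$ only vaguely (``identifies $\tilde u$ as the actual limit'') and has no substitute for this lemma. A repair along your lines is conceivable --- integrate the maximality inequality against nonnegative test functions and exploit weak lower semicontinuity of $u\mapsto\int\varphi\|u\|_R^2$ together with $p^0_k\le 0$ to handle the quadratic term --- but that argument is absent from the proposal as written, so the limit passage in the maximum condition does not go through.
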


For sake of conciseness and continuity in the exposition, we report the proof of Theorem \ref{ref:theoSCP} in the \iftoggle{ext}{Appendix}{extended version of the paper \cite{BonalliCauligiEtAl2019}}.
The convergence of GuSTO to a stationary point, in the sense of the Pontryagin Maximum Principle, for (\textbf{OCP}) is quickly obtained as a corollary.

\begin{corollary} \label{ref:corollSCP}
Under $(A_1)$-$(A_4)$, in solving (\textbf{OCP}) by Algorithm \ref{ref:algoSCP}, only three mutually exclusive situations arise:
\begin{enumerate}
\item There exists an iteration $k \ge 1$ for which $\omega_k > \omega_{\max}$. Then, Algorithm \ref{ref:algoSCP} terminates, providing a solution for (\textbf{LOCP})$_{k}$ that does not satisfy state constraints.
\item There exists an iteration $k \ge 0$ for which $(t^{k+1}_f,x_{k+1},u_{k+1}) = (t^{k}_f,x_{k},u_{k})$. Then, Algorithm \ref{ref:algoSCP} terminates, returning a stationary point, in the sense of the Pontryagin Maximum Principle, for (\textbf{OCP}).
\item We have $(t^{k+1}_f,x_{k+1},u_{k+1}) \neq (t^{k}_f,x_{k},u_{k})$, for every iteration $k \ge 0$. Then, Algorithm \ref{ref:algoSCP} builds a sequence of optimal solutions for (\textbf{LOCP})$_{k}$ that has a subsequence converging (with respect to appropriate topologies) to a stationary point, in the sense of the Pontryagin Maximum Principle, for the original problem (\textbf{OCP}).
\end{enumerate}
\end{corollary}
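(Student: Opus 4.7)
The plan is to read off the three cases directly from the termination logic of Algorithm \ref{ref:algoSCP} and then verify the Pontryagin-extremal claim in each non-trivial case. The \textbf{while}-loop of GuSTO exits exactly when either its first exit condition $(t^{k+1}_f,x_{k+1},u_{k+1}) = (t^{k}_f,x_{k},u_{k})$ is triggered, or its second exit condition $\omega_{k+1} > \omega_{\max}$ is triggered; if neither ever triggers, the algorithm generates an infinite sequence of accepted iterates. These three alternatives are manifestly mutually exclusive, so the three bullets exhaust all possibilities. Note that by Remark \ref{ref:remarkAlgoSCP}, the inner rejection branch of lines \textbf{7}--\textbf{9} cannot trap the algorithm, so the loop index $k$ is indeed well-defined and strictly increases as long as no exit condition fires.

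For case (1), there is nothing to prove beyond the fact that GuSTO returns the last accepted tuple $(t^k_f,x_k,u_k)$: since $\omega_{k+1}$ was incremented (line \textbf{13} or \textbf{16}) whenever $\bar g_2(x_{k+1}(\cdot)) > \varepsilon$, reaching $\omega_k > \omega_{\max}$ signals that the state-constraint penalty has not yet driven $\bar g_2$ below tolerance, so the returned solution is not guaranteed to be feasible for (\textbf{OCP}). For case (2), the key observation is that if $(t^{k+1}_f,x_{k+1},u_{k+1}) = (t^{k}_f,x_k,u_k)$, then the reference trajectory about which (\textbf{LOCP})$_{k+1}$ is linearized \emph{coincides} with its own optimal solution. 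Inspecting \eqref{ref:Cost_k} and \eqref{ref:AffineDynamics_k}, evaluating $f_{k+1}$, $\frac{\partial f_{k+1}}{\partial x}$, $f^0_{k+1}$ and $\frac{\partial f^0_{k+1}}{\partial x}$ along $(x_k,u_k)$ returns exactly $f$, $\frac{\partial f}{\partial x}$, $f^0$ and $\frac{\partial f^0}{\partial x}$ of (\textbf{OCP}) along the same trajectory; in particular the trust-region contribution $h_k(\|x-x_k\|^2 - \Delta_k)$ and its gradient vanish at the fixed point. Hence the Pontryagin Maximum Principle applied to (\textbf{LOCP})$_{k+1}$ yields a nontrivial adjoint $(p_{k+1},p^0_{k+1})$ whose Hamiltonian system and maximization condition reduce pointwise to those of (\textbf{OCP}), so $(x_k,p_{k+1},p^0_{k+1},u_k)$ is a Pontryagin extremal of (\textbf{OCP}).

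Case (3) is essentially a direct invocation of Theorem \ref{ref:theoSCP}. In this branch every iterate is accepted and $\omega_k \le \omega_{\max}$ for all $k$, so the assumptions of the theorem on the boundedness of $(\Delta_k,\omega_k)$ hold; hence there exists a subsequence along which $(t^k_f,x_k,u_k)$ converges (in the $\mathbb{R}\times C^0 \times \text{weak-}L^2$ sense) to a feasible tuple $(\tilde t_f,\tilde x,\tilde u)$ of (\textbf{OCP}), and a corresponding subsequence of adjoints converges to a nontrivial $(\tilde p,\tilde p^0)$ making $(\tilde x,\tilde p,\tilde p^0,\tilde u)$ a Pontryagin extremal, which is precisely the stationarity claim.

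The main obstacle is the verification in case (2) that the Pontryagin conditions of the linearized problem collapse to those of the original one at a fixed point of the iteration: one must check that the correct \emph{partial} derivatives used in forming the adjoint equation of (\textbf{LOCP})$_{k+1}$ agree with those of (\textbf{OCP}) when the reference trajectory is the iterate itself, and that the smoothed penalty $h_k$ and trust-region term contribute neither to the dynamics nor to the Hamiltonian maximization at $x = x_k$. Everything else follows routinely from Algorithm \ref{ref:algoSCP}'s control flow together with Theorem \ref{ref:theoSCP} and Remark \ref{ref:remarkAlgoSCP}.
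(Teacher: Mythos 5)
Your proposal is correct and follows essentially the same route as the paper: Remark \ref{ref:remarkAlgoSCP} for exhaustiveness and mutual exclusivity of the three cases, Theorem \ref{ref:theoSCP} for case (3), and an application of the Pontryagin Maximum Principle to (\textbf{LOCP})$_{k+1}$ for case (2). Your explicit verification that the linearized data $f_{k+1}$, $f^0_{k+1}$ and their $x$-derivatives coincide with those of (\textbf{OCP}) along a fixed point of the iteration (and that the trust-region term contributes nothing there) is exactly the step the paper leaves implicit in its one-line treatment of case (2), so it is a welcome elaboration rather than a divergence.
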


\begin{proof}
\noindent Thanks to Remark \ref{ref:remarkAlgoSCP}, it is clear that only these three cases may happen and that they are mutually exclusive. Then, we only need to consider cases 2) and 3). The latter follows from Theorem \ref{ref:theoSCP}. If Algorithm \ref{ref:algoSCP} falls into case 2), then by applying the Pontryagin Maximum Principle \cite{Pontryagin1987} to (\textbf{LOCP})$_{k+1}$, we have that $(x_{k+1},p_{k+1},p^0_{k+1},u_{k+1})$ is the desired (Pontryagin) stationary point for (\textbf{OCP}).
\end{proof}

Case 1) of Corollary \ref{ref:corollSCP} represents a failure and means that we are not able to compute a feasible optimized strategy, either due to an infeasible problem or an infeasible initialization that could not be refined to feasibility. The same occurs when considering TrajOpt \cite{SchulmanDuanEtAl2014}.

On the other hand, both cases 2) and 3) represent success. However, it is important to remark that from a practical point of view, because of numerical errors, when we begin satisfying some convergence criterion on $((t^{k}_f,x_{k},u_{k}))_{k \in \mathbb{N}}$ (which is up to the user) while solving (\textbf{OCP}) by GuSTO, we usually fall into case 3) and rarely fall into case 2) of Corollary \ref{ref:corollSCP}. At this point, Theorem \ref{ref:theoSCP} becomes crucial to ensuring that we are actually converging to a stationary point, in the sense of the Pontryagin Maximum Principle, for (\textbf{OCP}). This holds for the whole sequence of solutions $(t^{k}_f,x_{k},u_{k})$, since it is itself a converging subsequence. In addition, these theoretical guarantees achieved in a continuous-time setting remain valid independent of the chosen discretization scheme, as sufficiently small time steps allow the discrete SCP solution to remain close to the solution of the original continuous-time problem (\textbf{OCP}).

Notice that a similar framework is considered in \cite{MaoSzmukEtAl2016}, in which, for an infinite number of iterations, one can only provide weak convergence up to some subsequence if the control constraint set $U \subseteq \mathbb{R}^m$ is convex and compact. Indeed, this last assumption does not imply that $L^{\infty}([0,t_f],U)$ is compact (e.g., take $U$ to be the closed unit ball \cite{Brezis2011}).
In any case, the result provided by Theorem \ref{ref:theoSCP} remains stronger because, unlike \cite{MaoSzmukEtAl2016}, we obtain strong convergence of both trajectories and Pontryagin extremals. This feature can be exploited to provide convergence acceleration, as demonstrated in the next section.

\begin{figure*}[t!]
\centering
  \begin{subfigure}[t]{0.49\textwidth}
  \centering
    \includegraphics[height=1.9in]{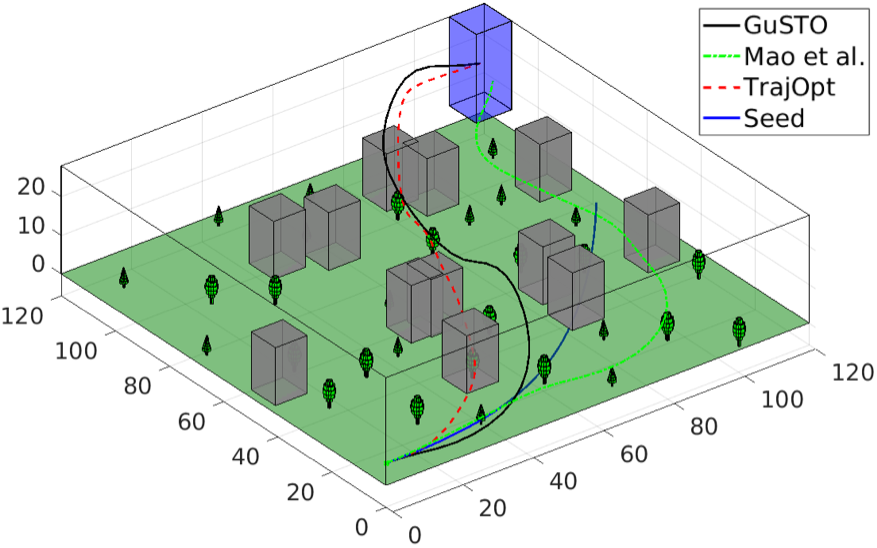}
    \caption{Using controller-tracked straight-line initialization}
    \label{fig:airplane_case2}
  \end{subfigure}
  \begin{subfigure}[t]{0.49\textwidth}
    \centering
    \includegraphics[height=1.9in]{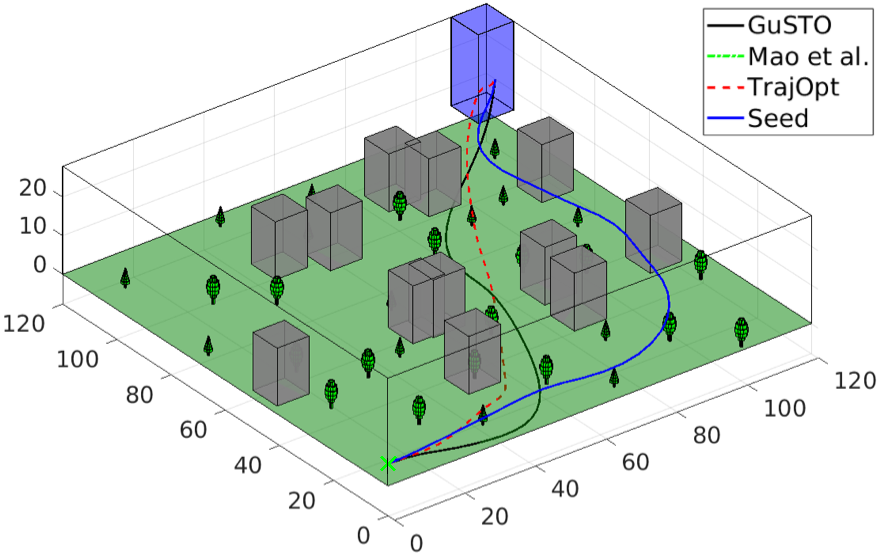}
    \caption{Using SOS planning initialization}
    \label{fig:airplane_case3}
  \end{subfigure}
  
  \caption{Comparing initialization strategies on an 8D airplane model for three different SCP algorithms.}
    \label{fig:airplane}
    \vspace{-15pt}
\end{figure*}

\subsection{Accelerating Convergence using Shooting Methods} \label{subsec:shooting_methods}
An important result provided by Theorem \ref{ref:theoSCP} is the convergence of Pontryagin extremals related to the sequence of solutions to problems (\textbf{LOCP})$_{k}$ towards a Pontryagin extremal related to the solution of (\textbf{OCP}) found by GuSTO. In particular, we can use this result to accelerate convergence by warm-starting shooting methods \cite{Betts1998} using the dual solution from each SCP iteration.

This can be shown as follows. Assuming that GuSTO is converging, the Lagrange multipliers $\lambda^0_k$ related to the initial condition $x(0) = \bar{x}_0$ for the finite dimensional discretization of problems (\textbf{LOCP})$_{k}$ approximate the initial values $p_k(0)$ of the adjoint vectors related to each (\textbf{LOCP})$_{k}$ \cite{GollmannKernEtAl2008}. Then, up to some subsequence, for every small $\delta > 0$, there exists an iteration $k_{\delta} \ge 1$ for which, for every iteration $k \ge k_{\delta}$, one has $\| \tilde p(0) - \lambda^0_k \| < \delta$, where $\tilde p$ is an adjoint vector related to the solution of (\textbf{OCP}) found by SCP. This means that, starting from some iteration $k \ge k_{\delta}$, we can run a shooting method to solve (\textbf{OCP}), initializing it by $\lambda^0_k$. Thus, at each iteration of GuSTO, we use the $\lambda^0_k$ provided by the solver to initialize the shooting method until convergence. In practice, this method provides a principled approach to facilitate fast convergence of sequential convex programming towards a more precise solution.


\section{Numerical Experiments and Discussion}
\label{sec:experiments}
In this section, we provide implementation details and examples to demonstrate various facets of our approach.

\subsection{Implementation Details}
\label{subsec:implementation_details}
We implemented the examples in this section in a trajectory optimization library written in Julia \cite{BezansonKarpinskiEtAl2012} and available at \url{https://github.com/StanfordASL/GuSTO.jl}. Computation times reported are from a Linux system equipped with a 4.3GHz processor and 32GB RAM.
For each system and compared algorithm, we discretized the cost and dynamics of the continuous-time optimal control problem using a trapezoidal rule, assuming a zero-order hold for the control. The number of discretization points $N$ was set to 30-40 in the presented results. We used the Bullet Physics engine to calculate signed distance fields used for obstacle avoidance constraints \cite{Coumans, Ericson2004}, and an SCP trial was marked as successful if the algorithm converged and the resulting solution was collision-free. In comparisons with Mao, et al. \cite{MaoSzmukEtAl2016}, since their approach does not address nonconvex state inequality constraints (e.g. collision avoidance), we chose to enforce linearized collision-avoidance constraints in their algorithm as hard inequality constraints.

\subsection{Batch Comparison using a Simple Initialization Scheme}
\label{subsec:scp_comparison}
In this section, we compared the GuSTO algorithm to previous SCP algorithms, TrajOpt \cite{SchulmanDuanEtAl2014} and Mao, et al. \cite{MaoSzmukEtAl2016}, for a 12D free-flying spacecraft robot model, within a cluttered mock-up of the International Space Station. For these dynamics, the state consists of position $\mathbf{r}\in\mathbb{R}^3$, velocity $\mathbf{v}\in\mathbb{R}^3$, the Modified Rodrigues parameters representation of attitude $\mathbf{p}\in\mathbb{R}^3$, and angular velocity $\boldsymbol{\omega}\in\mathbb{R}^3$ \cite{Aoude2007}. Constraints for this system included norm bounds on speed, angular velocity, and control. We modeled the free-flyer robot parameters after the Astrobee robot \cite{SmithBarlowEtAl2016}, details of which can be found at \cite{Astrobee}. The dynamics discretization error reported for an SCP solution was defined as $\sum_{i=1}^{N-1} \|(x^{(i+1)}-x^{(i)})N - f(x^{(i)},u^{(i)})\|_1$. We ran 100 experiments with different start and goal states in the environment shown in Figure \ref{fig:free_flyer_mp_scp}. Each trajectory was initialized with a simple straight line in position space and a geodesic path in rotation space and no control initialization. The results of our simulations are presented in Figure \ref{fig:astrobee3d_results}. For the set of simulations, both GuSTO and TrajOpt successfully returned solutions for 97\% of the trials, though GuSTO on average performed faster and returned higher-quality solutions. Due to the simple initialization often being deep in collision, Mao, et al. had a high failure rate, and failure cases sometimes led to high computation times. 

\begin{figure*}[t!]
\centering
  \begin{subfigure}[t]{0.15\textwidth}
  \centering
  \captionsetup{justification=centering}
    \includegraphics[height=1.5in,trim={0 0.25cm 0 0},clip]{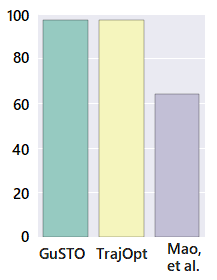}
    \caption{Success percentage}
    \label{fig:astrobee_percent_succ}
  \end{subfigure}
  \qquad
  \begin{subfigure}[t]{0.15\textwidth}
  \centering
  \captionsetup{justification=centering}
    \includegraphics[height=1.5in]{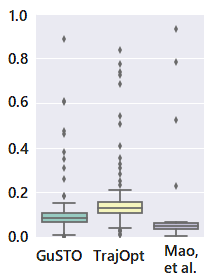}
    \caption{Optimal cost (on success)}
    \label{fig:astrobee_cost}
  \end{subfigure}
  \qquad
  \begin{subfigure}[t]{0.15\textwidth}
    \centering
    \captionsetup{justification=centering}
    \includegraphics[height=1.5in]{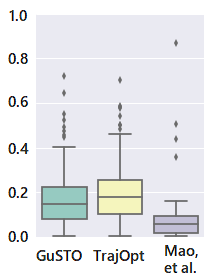}
    \caption{Dynamics discretization error}
    \label{fig:astrobee_dyn_cost}
  \end{subfigure}
  \qquad
    \begin{subfigure}[t]{0.15\textwidth}
    \centering
    \captionsetup{justification=centering}
    \includegraphics[height=1.5in]{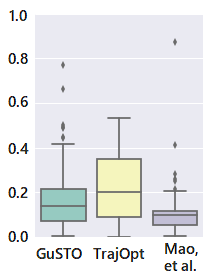}
    \caption{Success computation times}
    \label{fig:astrobee_success_times}
  \end{subfigure}
    \qquad
    \begin{subfigure}[t]{0.15\textwidth}
    \centering
    \captionsetup{justification=centering}
    \includegraphics[height=1.5in]{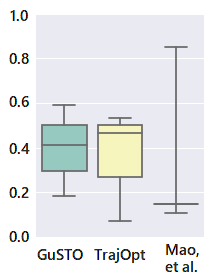}
    \caption{Failure computation times}
    \label{fig:astrobee_failure_times}
  \end{subfigure}
  \caption{Normalized simulation results for the 3D free-flying robot simulation.}
\label{fig:astrobee3d_results}
 \vspace{-15pt}
\end{figure*}

\subsection{Initialization Strategies}
\label{subsec:initialization_strategies}
In practice, when a high-quality initialization trajectory (e.g. dynamically feasible, collision-free, close to a global optimal, etc.) is readily available, it should be used. However, an initial planner (even a coarse one) is often not available and may be expensive to design or time-consuming to run. Thus, we investigate the sensitivity of our approach to initialization, including very simple initialization schemes, again comparing with TrajOpt \cite{SchulmanDuanEtAl2014} and Mao, et al. \cite{MaoSzmukEtAl2016}.

In particular, in this section we ran simulations on an 8D airplane model, having dynamics as in \cite{BeardMcLain2012}. To explore different initialization strategies, we leveraged a recent approach from \cite{SinghChenEtAl2018}. This approach uses a lower-dimensional 4D planning model to generate a path to the goal, which is tracked by a controller to generate a dynamically-feasible trajectory for the full-dimensional system. By planning with tubes that account for model mismatch, the full 8D trajectory is guaranteed to be collision-free.

Using this work, we tested three initialization strategies of increasing quality for the 8D airplane: (1) a simple straight line in the 8D state space, (2) an 8D dynamically-feasible (but possibly in collision) trajectory generated using a controller to track a straight-line initialization in 4D, and (3) an 8D dynamically-feasible and collision-free trajectory recovered from running the full motion planning with model-mismatch tubes in 4D. As illustrated in Figure \ref{fig:airplane}, the problem scenario consists of guiding the airplane to a terminal goal set across a cluttered environment.

For this system, due to the complex coupling in its dynamics, initialization (1) resulted in failure for all three SCP algorithms. The results of initialization (2) can be seen in Fig. \ref{fig:airplane_case2}, where GuSTO found a feasible solution, whereas Mao, et. al. returned a trajectory without satisfying convergence criteria, and TrajOpt resulted in collision. For the highest-quality initialization (3), Mao, et al. did not return a solution, whereas GuSTO and TrajOpt returned feasible trajectories, with run times of 0.55s and 2.67s and cost improvements over the initialization of 55\% and 49\%, respectively.

\subsection{Shooting Method Acceleration}
To investigate the use of dual solutions from our SCP iterations to warm-start shooting methods and accelerate convergence, we ran simulations on a simple 3D Dubin's car and the 12D free-flyer robot, where the free-flyer was placed in an obstacle-filled environment. 

The acceleration technique gave very promising results, as shown in Table \ref{table:shooting}. In practice, running a shooting method to completion, whether to convergence or to a maximum number of iterations, required negligible computation time compared to an SCP iteration ($<1$ ms). Thus, we attempted a shooting method at every iteration of SCP. As shown, using SCP to complete refinement was very time-consuming, whereas the shooting method would converge after just a few SCP iterations, thus reducing computation time for the car and free-flyer models by 52\% and 74\%, respectively.

\newcommand{\addstackgapmod}[1]{\addstackgap[2pt]{#1}}
\newcolumntype{D}{>{\centering\arraybackslash}p{1.15cm}}
\newcommand{\Centerstackmodd}[1]{\addstackgap[2pt]{\Centerstack{#1}}}
\newcolumntype{E}{>{\centering\arraybackslash}p{1.6cm}}

\begin{table}[h!]
\hspace{2.75pt}\begin{tabular}{E | D | D || D | D |}
\cline{2-5}
& \multicolumn{2}{c||}{\addstackgap[2pt]{Dubin's Car}}
& \multicolumn{2}{c|}{\addstackgap[2pt]{Free-flyer Spacecraft}}
\\
\cline{2-5}
& \Centerstackmodd{SCP Only}
& \Centerstackmodd{{SCP +} Shooting}
& \Centerstackmodd{SCP Only}
& \Centerstackmodd{{SCP +} Shooting}
\end{tabular}

\begin{tabular}{| E || D | D || D | D |}
\hline
\addstackgapmod{SCP Iterations} & 12 & 6 & 11 & 3 \\
\hline
\addstackgapmod{Reported Cost} & 19.8 & 19.8 & 6.2 & 6.2 \\
\hline
\addstackgapmod{Running Time} & 94 ms & \textbf{45 ms} & 570 ms & \textbf{146 ms} \\
\hline
\end{tabular}
\caption{Results accelerating convergence by using SCP dual solution to warm-start a shooting method. SCP iterations report are the number required for convergence.}
\label{table:shooting}
\end{table}

\subsection{Hardware Experiments}
We also implemented GuSTO in the Stanford Space Robotics Facility on a free-flyer robot. The robot is a three-DoF, fully holonomic system and is equipped with eight thrusters and a reaction wheel, maneuvering on a frictionless surface. Experiments consisted of a spacecraft navigating a cluttered environment to berth with a capturing spacecraft. Video of the experiments can be found at \url{https://youtu.be/GHehE-If5nY}.

\section{Conclusions}
\label{sec:conclusions}
In this paper, we provided a new generalized approach to solve trajectory optimization problems, based on sequential convex programming. We showed strong theoretical guarantees to ensure broad applicability to many different frameworks in motion planning and trajectory optimization. GuSTO was tested with numerical simulations and experiments showing that more accurate solutions are achieved faster than using recent state-of-the-art SCP solvers.

Future contributions will focus on additional theoretical guarantees. More precisely, we will study higher-order conditions that GuSTO should naturally provide, showing its convergence to more informative points than stationary points, as well as natural extensions of these guarantees to more general manifolds and systems evolving in spaces having a Lie group structure. Finally, GuSTO will be tested on high-DOF systems, such as robot arms and humanoid robots, and we will use the algorithm for hardware experiments on a free-flyer robot in a full $SE(3)$ microgravity environment.

\section*{Acknowledgements}
We would like to thank Brian Coltin, Andrew Symington, and Trey Smith of the Intelligent Robotics Group at NASA Ames Research Center for their discussions during this work, as well as Sumeet Singh, Thomas Lew, and Tariq Zahroof for their assistance on experiment implementation.


\bibliographystyle{IEEEtran}
\bibliography{IEEEabrv,main,ASL_papers}

\iftoggle{ext}{\section*{Appendix A: Proof of Theorem \ref{ref:theoSCP}}
\label{sec:appendix}

\subsection{Pontryagin Maximum Principle}

Our theoretical result provides convergence of SCP procedures towards a quantity satisfying first-order necessary optimality conditions under the Pontryagin Maximum Principle \cite{Pontryagin1987}. Below, we report the Pontryagin Maximum Principle for time-varying problems, which is useful hereafter.

\begin{theorem}[Pontryagin Maximum Principle] \label{ref:theo_PMP}
Let $x$ be an optimal trajectory for (\textbf{OCP}), associated with the control $u$ in $[0,t_f]$. There exist a nonpositive scalar $p^0$ and an absolutely continuous function $p : [0,t_f] \rightarrow \mathbb{R}^n$, called ßadjoint vector, with $(p,p^0) \neq 0$, and such that, almost everywhere in $[0,t_f]$, the following relations hold:
\begin{itemize}
\item \textbf{Adjoint Equations}
\begin{eqnarray} \label{ref:adjointEq}
\begin{cases}
\displaystyle \dot{x}(t) = \frac{\partial H}{\partial p}(t,x(t),p(t),p^0,u(t)) \medskip \\
\displaystyle \dot{p}(t) = -\frac{\partial H}{\partial x}(t,x(t),p(t),p^0,u(t))
\end{cases}
\end{eqnarray}
\item \textbf{Maximality Condition}
\begingroup
\begin{equation} \label{ref:maxCond}
H(t,x(t),p(t),p^0,u(t)) = \underset{v \in U}{\max} \ H(t,x(t),p(t),p^0,v)
\end{equation}
\endgroup
\item \textbf{Transversality Conditions}

If $M_f$ is a submanifold of $M$, locally around $x(t_f)$, then the adjoint vector can be built in order to satisfy
\begin{equation} \label{ref:trasv1}
p(t_f) \quad \perp \quad T_{x(t_f)} M_f
\end{equation}
and, in addition, if the final time $t_f$ is free, one has
\begin{equation} \label{ref:trasv2}
\underset{v \in U}{\max} \ H(t_f,x(t_f),p(t_f),p^0,v) = 0 \quad .
\end{equation}
\end{itemize}
Here, $H(t,x,p,p^0,u) = p \cdot f(t,x,u) + p^0 f^0(t,x,u)$ denotes the Hamiltonian related to (\textbf{OCP}) and the quantity $(x,p,p^0,u)$ is called (Pontryagin) extremal. We say that an extremal is normal if $p^0 \neq 0$ and is abnormal otherwise.
\end{theorem}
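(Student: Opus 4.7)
My plan is to organize the argument around three pillars: (i) subsequential compactness and feasibility of the primal limit, (ii) existence and uniform control of the dual (adjoint) sequence via the PMP applied iterate-by-iterate, and (iii) passage to the limit in the PMP relations. For compactness, I would use $(A_1)$--$(A_3)$: compactness of $U$ and of $[0,b]$, together with compact supports from $(A_2)$, make the right-hand sides $f_{k+1}(t,x_k,u_k)$ uniformly bounded, so $\{x_k\}$ is equi-Lipschitz. Extend each $(x_k,u_k)$ to $[0,b]$ in a controlled way, extract $t^k_f\to\tilde t_f$ by Bolzano--Weierstrass, apply Arzel\`a--Ascoli to get strong $C^0$ convergence $x_k\to\tilde x$, and Banach--Alaoglu to get weak $L^2$ convergence $u_k\rightharpoonup\tilde u$ with $\tilde u(t)\in U$ a.e.\ (via Mazur's lemma and convexity of $U$).

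For feasibility, the control-affine structure is what makes the linearized dynamics converge to the true dynamics despite only weak convergence of controls: the quantity $f_{k+1}(t,x_{k+1},u_{k+1})-f(x_{k+1},u_{k+1})$ is an explicit linearization residue of size $O(\|x_{k+1}-x_k\|_{C^0})$, which vanishes along the subsequence since $\{x_k\}$ is $C^0$-Cauchy. Then the identity $\dot x_{k+1}=f_0(x_{k+1})+\sum_i u_{k+1}^i f_i(x_{k+1})+o(1)$ passes to the limit by writing everything in integral form and exploiting that $f_i(x_k)\to f_i(\tilde x)$ strongly in $C^0$ while $u_k^i$ converges only weakly, which is precisely enough for products of the form $\int_0^t f_i(x_k(s))u_k^i(s)\,ds$. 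Closedness of $M_f$ from $(A_1)$ yields $\tilde x(\tilde t_f)\in M_f$, and the state-penalty contribution to the cost is lower semicontinuous, giving feasibility for (\textbf{OCP}).

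For the dual sequence, I would apply Theorem \ref{ref:theo_PMP} to each (\textbf{LOCP})$_k$ to obtain a Pontryagin extremal $(x_k,p_k,p^0_k,u_k)$ and normalize $\|p_k(0)\|^2+|p^0_k|^2=1$. The adjoint equation reads $\dot p_k=-\partial H_{k+1}/\partial x$ and picks up three contributions: the linearized drift/control vector fields (bounded by $(A_2)$), the linearized cost (bounded since $\omega_k\le\omega_{\max}$), and the derivative of the trust-region smoothing $h_k(\|x-x_k\|^2-\Delta_k)$, which is controlled because $h_k'$ is bounded on bounded sets and $\|x_{k+1}-x_k\|\to 0$. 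A Gronwall argument on this linear ODE in $p_k$ gives a uniform bound in $L^\infty$, hence uniform Lipschitz continuity; Arzel\`a--Ascoli delivers $p_k\to\tilde p$ strongly in $C^0$ and $p^0_k\to\tilde p^0$ in $[-1,0]$, preserving nontriviality thanks to the normalization. Passing to the limit in the integrated adjoint equation and in the transversality conditions $p_k(t^k_f)\perp T_{x_k(t^k_f)}M_f$ (closed graph of the normal bundle along the converging endpoint) is then routine; for free final time, continuity of the Hamiltonian and continuity of $t\mapsto H(t,\tilde x(t),\tilde p(t),\tilde p^0,\tilde u(t))$ at $\tilde t_f$ deliver \eqref{ref:trasv2}. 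In the fixed-$t_f$ case the trivial subsequence argument gives $\tilde t_f=t_f$.

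The main obstacle, and where the proof genuinely requires the control-affine/quadratic-cost structure, is passing to the limit in the maximality condition \eqref{ref:maxCond}. Generic weak $L^2$ convergence of $u_k$ is not enough to identify $\tilde u(t)$ as the pointwise maximizer of the limit Hamiltonian. The decisive observation is that the Hamiltonian
\[
H_k(t,x,p,p^0,u)=p\cdot f_0(x)+\textstyle\sum_i u^i\bigl(p\cdot f_i(x)+p^0 f^{0,i}(x)\bigr)+p^0\|u\|_R^2+p^0 g_k(x)
\]
is, on the normal branch $p^0<0$, strictly concave in $u$ with Hessian $2p^0R\prec 0$, so its unique maximizer on the convex compact set $U$ is given by the projection of an affine expression in $(x_k,p_k,p^0_k)$ onto $U$ and is therefore a continuous function of those arguments. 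Strong $C^0$ convergence of $x_k$ and $p_k$ then upgrades $u_k$ to \emph{strong} pointwise (and $L^2$) convergence to the maximizer of the limit $H$, and the maximality condition passes to the limit directly. The abnormal case $\tilde p^0=0$ must be handled separately, either by ruling it out using the normalization and the presence of the $p^0\|u\|_R^2$ term, or by observing that weak convergence still suffices when $H$ becomes linear in $u$ and the maximum is attained on the boundary of $U$. Once the maximality condition holds in the limit, $(\tilde x,\tilde p,\tilde p^0,\tilde u)$ is a Pontryagin extremal of (\textbf{OCP}), completing the proof.
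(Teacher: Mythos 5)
You have proved the wrong statement. Theorem \ref{ref:theo_PMP} is the classical Pontryagin Maximum Principle: a first-order necessary optimality condition attached to a \emph{single} optimal trajectory $x$ of (\textbf{OCP}), asserting the existence of a nontrivial pair $(p,p^0)$ satisfying \eqref{ref:adjointEq}--\eqref{ref:trasv2}. Your proposal instead sketches a proof of Theorem \ref{ref:theoSCP}, the convergence of the SCP iterates $(t^k_f,x_k,u_k)$ to a Pontryagin extremal. Nothing in the statement of Theorem \ref{ref:theo_PMP} involves a sequence of linearized problems, weak $L^2$ limits of controls, or trust-region radii. Worse, your argument explicitly \emph{invokes} Theorem \ref{ref:theo_PMP} (``apply Theorem \ref{ref:theo_PMP} to each (\textbf{LOCP})$_k$'') as an ingredient, so read as a proof of the PMP it is circular. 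A proof of the PMP requires an entirely different toolkit: needle (local) variations of the control as in \eqref{ref:localVarOCP}, the variational system \eqref{ref:varOCP} transporting these variations to the final time, the Pontryagin cone $K_{x,u}(t_f)$, and a separation argument showing that optimality forces the existence of a nontrivial covector $(p_f,p^0)$ satisfying \eqref{ref:variationalOCP}; the adjoint equation, maximality condition, and transversality conditions are then derived from that separation. This is precisely the machinery the paper recalls (following \cite{Pontryagin1987,AgrachevSachkov2004,HaberkornTrelat2011}) in the ``Pontryagin Cone Analysis'' part of Appendix A, and none of it appears in your proposal.

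For what it is worth, if your intended target was Theorem \ref{ref:theoSCP}, your compactness/feasibility/multiplier skeleton does track the paper's proof, but you diverge at the decisive step. The paper never upgrades the weak $L^2$ convergence of $u_k$ to strong convergence; it passes to the limit in the maximality condition by proving convergence of the Pontryagin cones $K_{k}(t^k_f)$ to $K_{\tilde x,\tilde u}(\tilde t_f)$ (Lemmas \ref{ref:lemmaLebesgue} and \ref{ref:lemmaCone}), which is exactly where the continuity Assumption $(A_4)$ is used, and then lets the separating covectors converge. Your alternative --- strict concavity of $H$ in $u$ when $p^0<0$ forcing the maximizer to depend continuously on $(x_k,p_k,p^0_k)$ --- only works on the normal branch, and your handling of the abnormal case $\tilde p^0=0$ is left as an unproven alternative (``either by ruling it out \dots or by observing \dots''), so even as a proof of Theorem \ref{ref:theoSCP} the argument has a gap at its central step.
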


It is important to remark that Theorem \ref{ref:theo_PMP} provides more informative multipliers than those given by the Lagrange multiplier rule, because control constraints do not need to be penalized within the cost, the related Hamiltonian is globally maximized, and $(p,p^0)$ are merely continuous functions.

\subsection{Pontryagin Cone Analysis}

We provide the proof of Theorem \ref{ref:theoSCP} for the case of free final time problems, since, for fixed final time problems, the proof is similar but simpler (and quite straightforward, see below). The proof is based on the properties related to Pontryagin cones \cite{Pontryagin1987}. Therefore, we start by providing useful definitions and statements concerning these quantities.

Let $x$ be a feasible trajectory for (\textbf{OCP}), with associated control $u$ in $[0,t_f]$. Throughout the proof, we assume that $t_f$ is a Lebesgue point of $u$. Otherwise, one proceeds using limiting cones as done in \cite{LeeMarkus1967}. For every Lebesgue point $s \in [0,t_f]$ of $u$ and every $v \in U$, we define local variations as
\begingroup
\begin{equation} \label{ref:localVarOCP}
\psi^{s,v}_{x,u} = \left( \begin{array}{c}
\displaystyle f(x(s),v) - f(x(s),u(s)) \\
\displaystyle f^0(x(s),v) - f^0(x(s),u(s))
\end{array} \right) .
\end{equation}
\endgroup
The variation vector $w^{s,v}_{x,u} : [0,t_f] \rightarrow \mathbb{R}^{n+1}$ for (\textbf{OCP}) is the solution of the following variational system
\begin{eqnarray} \label{ref:varOCP}
\begin{cases}
\dot{\psi}(t) = \displaystyle \psi(t) \left( \begin{array}{c}
\displaystyle \frac{\partial f}{\partial x}(x(t),u(t)) \\
\displaystyle \frac{\partial f^0}{\partial x}(x(t),u(t))
\end{array} \right) \bigskip \\
\psi(s) = \psi^{s,v}_{x,u}
\end{cases} .
\end{eqnarray}
At this step, for every $t \in [0,t_f]$, we define the Pontryagin cone $K_{x,u}(t)$ at $t$ for $(x,u)$ related to (\textbf{OCP}) to be the smallest closed convex cone containing $w^{s,v}_{x,u}(t)$ for every $0 < s < t$ Lebesgue point of $u$ and every $v \in U$. Arguing by contradiction \cite{AgrachevSachkov2004,HaberkornTrelat2011}, the Pontryagin Maximum Principle states that, if $(t_f,x,u)$ is optimal for (\textbf{OCP}), then there exists a nontrivial couple $(p_f,p^0) \in \mathbb{R}^{n+1}$ satisfying
\begin{eqnarray} \label{ref:variationalOCP}
\begin{cases}
p_f \quad \perp \quad T_{x(t_f)} M_f \quad , \quad p^0 \le 0 \medskip \\
(p_f,p^0) \cdot w \le 0 \quad , \quad \forall \ w \in K_{x,u}(t_f) \medskip \\
\underset{v \in U}{\max} \ H(x(t_f),p_f,p^0,v) = 0
\end{cases} .
\end{eqnarray}
Relations \eqref{ref:adjointEq}-\eqref{ref:trasv2} derive from \eqref{ref:variationalOCP}. In particular, a tuple $(x,p,p^0,u)$ is a Pontryagin extremal for (\textbf{OCP}) iff the nontrivial couple $(p(t_f),p^0) \in \mathbb{R}^{n+1}$ satisfies \eqref{ref:variationalOCP}. However, a Pontryagin extremal is not necessarily a solution for (\textbf{OCP}).

\addtolength{\textheight}{-2.5cm}   

Now, consider controls $u_{k}$ and $u_{k+1}$, solutions of (\textbf{LOCP})$_{k}$ and of (\textbf{LOCP})$_{k+1}$ with final times $t^k_f$ and $t^{k+1}_f$, respectively. If necessary and without loss of generality, thanks to Assumption $(A_3)$ we can continuously extend these controls to be constant in $[t^k_f,b]$ and $[t^{k+1}_f,b]$, respectively. We apply the same procedure to trajectories $x_{k}$ and $x_{k+1}$. Therefore, for every iteration $k \ge 1$, $u_{k}$, $u_{k+1}$ are continuous functions in $[0,b]$ and for every $s \in [0,b]$ and every $v \in U$, we are able to define local variations for (\textbf{LOCP})$_{k+1}$ as
\begingroup
\small
\begin{equation} \label{ref:localVarLOCP}
\psi^{s,v}_{k+1} = \left( \begin{array}{c}
\displaystyle f_{k+1}(s,x_{k+1}(s),v) - f_{k+1}(s,x_{k+1}(s),u_{k+1}(s)) \\
\displaystyle f^0_{k+1}(s,x_{k+1}(s),v) - f^0_{k+1}(s,x_{k+1}(s),u_{k+1}(s))
\end{array} \right) 
\end{equation}
\endgroup
and related variation vectors $w^{s,v}_{k+1} : [0,b] \rightarrow \mathbb{R}^{n+1}$ as the solutions of the following variational system:
\begin{eqnarray} \label{ref:varLOCP}
\begin{cases}
\dot{\psi}(t) = \displaystyle \psi(t) \left( \begin{array}{c}
\displaystyle \frac{\partial f_{k+1}}{\partial x}(t,x_{k+1}(t),u_{k+1}(t)) \\
\displaystyle \frac{\partial f^0_{k+1}}{\partial x}(t,x_{k+1}(t),u_{k+1}(t))
\end{array} \right) \bigskip \\
\psi(s) = \psi^{s,v}_{k+1}
\end{cases} .
\end{eqnarray}
Thus, from the above and due to the optimality of $(t^{k+1}_f,x_{k+1},u_{k+1})$, the Pontryagin Maximum Principle states that, for every iteration $k$ of SCP, there exists a nontrivial couple $(p_{k+1},p^0_{k+1}) \in C^0([0,b],\mathbb{R}^n) \times \mathbb{R}$ satisfying
\begingroup
\small
\begin{eqnarray} \label{ref:variationalLOCP}
\begin{cases}
p_{k+1}(t^{k+1}_f) \quad \perp \quad T_{x_{k+1}(t^{k+1}_f)} M_f \quad , \quad p^0_{k+1} \le 0 \medskip \\
(p_{k+1}(t^{k+1}_f),p^0_{k+1}) \cdot w \le 0 \quad , \quad \forall \ w \in K_{k+1}(t^{k+1}_f) \medskip \\
\underset{v \in U}{\max} \ H_{k+1}(t^{k+1}_f,x_{k+1}(t^{k+1}_f),p_{k+1}(t^{k+1}_f),p^0_{k+1},v) = 0
\end{cases}
\end{eqnarray}
\endgroup
where the Pontryagin cone $K_{k+1}(t)$ at $t$ for (\textbf{LOCP})$_{k+1}$ is defined as above, by substituting \eqref{ref:localVarOCP}-\eqref{ref:varOCP} with \eqref{ref:localVarLOCP}-\eqref{ref:varLOCP}, and $H_{k+1}$ is the Hamiltonian related to problem (\textbf{LOCP})$_{k+1}$.

We now prove Theorem \ref{ref:theoSCP} in two main steps:

\subsubsection{Convergence of Trajectories and Controls}

First, consider the sequence of final times $(t^{k}_f)_{k \in \mathbb{N}}$. Thanks to Assumption $(A_3)$, there exists $\tilde t_f \in [0,b]$ such that, up to some subsequence, $(t^{k}_f)_{k \in \mathbb{N}}$ converges to $\tilde t_f$. As discussed previously, from now on, we consider every couple $(x_k,u_k)$ to be continuously defined in the time interval $[0,b]$.

Next, consider the sequence $(u_k)_{k \in \mathbb{N}} \subseteq L^{\infty}([0,b],U)$. Thanks to Assumption $(A_1)$, $(u_k)_{k \in \mathbb{N}}$ is bounded in $L^2([0,b],\mathbb{R}^m)$. Moreover, the subset $L^2([0,b],U)$ is closed and convex in $L^2([0,b],\mathbb{R}^m)$ for the strong topology, and then also for the weak topology \cite{Brezis2011}. Thanks to Assumption $(A_1)$ and reflexive properties for $L^2$, there exists $\tilde u \in L^{\infty}([0,b],U)$ such that, up to some subsequence, $(u_k)_{k \in \mathbb{N}}$ converges to $\tilde u$ for the weak topology of $L^2$ \cite{Brezis2011}.

Finally, we focus on the sequence $(x_k)_{k \in \mathbb{N}} \subseteq C^0([0,b],\mathbb{R}^n)$. It is clear that Assumptions $(A_1)$ and $(A_2)$ provide that both $(x_k)_{k \in \mathbb{N}}$ and $(\dot{x}_k)_{k \in \mathbb{N}}$ are bounded in $L^2([0,b],\mathbb{R}^n)$. Therefore, $(x_k)_{k \in \mathbb{N}}$ is bounded in the Sobolev space $H^1([0,b],\mathbb{R}^n)$. From reflexive properties, it follows that there exists $\tilde x \in H^1([0,b],\mathbb{R}^n)$ such that, up to some subsequence, $(x_k)_{k \in \mathbb{N}}$ converges to $\tilde x$ for the weak topology of $H^1$. Furthermore, since the inclusion $H^1 \xhookrightarrow{} C^0$ is compact, $(x_k)_{k \in \mathbb{N}}$ converges to $\tilde x \in C^0([0,b],\mathbb{R}^n)$ for the strong topology of $C^0$ \cite{Brezis2011}

For every integer $k$, $(x_{k+1},u_{k+1})$ is feasible for (\textbf{LOCP})$_{k+1}$, and therefore (after the obvious extensions),
$$
x_{k+1}(t) = \bar{x}_0 + \int_{0}^{t} f_{k+1}(s,x_{k+1}(s),u_{k+1}(s)) \; \mathrm{d}s \ , \ t \in [0,b] .
$$
From this, by exploiting Assumptions $(A_1)$, $(A_2)$, and the previous convergences, it follows that $(\tilde x,\tilde u)$ is feasible for problem (\textbf{OCP}) (note that $\tilde x(\tilde t_f) = \underset{k \rightarrow \infty}{\lim} x_k(t^k_f) \in M_f$, since, up to some subsequence, the limit $\underset{k \rightarrow \infty}{\lim} x_k(t^k_f)$ exists thanks to the compactness of $M_f$, see Assumption $(A_1)$).

\subsubsection{Convergence of Multipliers}

We now discuss the convergence to a Pontryagin extremal for (\textbf{OCP}). Assumption $(A_4)$ proves crucial to establishing the following Lemma:

\begin{lemma} \label{ref:lemmaLebesgue}
Suppose that Assumption $(A_4)$ holds. For every $s \in (0,\tilde t_f)$ Lebesgue point of $\tilde u$, there exists a sequence $(s_k)_{k \in \mathbb{N}} \subseteq [s,\tilde t_f)$, for which $s_k$ is a Lebesgue point of $u_k$ and of $u_{k+1}$, such that
$$
u_k(s_k) \rightarrow \tilde u(s) \quad , \quad u_{k+1}(s_k) \rightarrow \tilde u(s) \quad , \quad s_k \rightarrow s
$$
as $k$ tends to infinity.
\end{lemma}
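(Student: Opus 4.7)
By Assumption $(A_4)$, each $u_k$ and $u_{k+1}$ is continuous on $[0,b]$, so every point of that interval is automatically a Lebesgue point of both controls. The Lebesgue--point clause in the lemma is therefore trivially satisfied, and the task reduces to producing a sequence $s_k \to s$ with $s_k \in [s,\tilde t_f)$ such that $u_k(s_k) \to \tilde u(s)$ and $u_{k+1}(s_k) \to \tilde u(s)$.

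The plan is a diagonal argument coupling the weak $L^2$ convergence $u_k \rightharpoonup \tilde u$ with the Lebesgue--point identity at $s$. First, for every fixed $h > 0$ with $[s,s+h] \subset [0,\tilde t_f)$, testing the weak convergence against the indicator $1_{[s,s+h]} \in L^2$ yields
\begin{equation*}
A_k^h := \frac{1}{h}\int_s^{s+h} u_k(t)\,\mathrm{d}t \;\longrightarrow\; \frac{1}{h}\int_s^{s+h} \tilde u(t)\,\mathrm{d}t \quad \text{as } k \to \infty,
\end{equation*}
and the right-hand side tends to $\tilde u(s)$ as $h \to 0^+$ since $s$ is a Lebesgue point of $\tilde u$. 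A standard diagonal extraction then produces a sequence $h_k \to 0^+$, shrinking slowly enough with respect to $k$, such that both $A_k^{h_k} \to \tilde u(s)$ and $A_{k+1}^{h_k} \to \tilde u(s)$.

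It remains to pick $s_k \in [s, s+h_k]$ at which the pointwise values $u_k(s_k)$ and $u_{k+1}(s_k)$ are close to the corresponding averages $A_k^{h_k}$ and $A_{k+1}^{h_k}$. Since $u_k$ is continuous on $[s,s+h_k]$, its average lies in the closed convex hull of its image $u_k([s,s+h_k])$. For scalar controls ($m=1$), the intermediate value theorem immediately furnishes an $s_k^{(k)}$ with $u_k(s_k^{(k)}) = A_k^{h_k}$, and similarly an $s_k^{(k+1)}$ for $u_{k+1}$; a final refinement of $h_k$ aligns the two choices into a common $s_k$. For vector-valued controls, the same reasoning yields a point $s_k$ on which each component of $u_k$ attains its mean only up to the oscillation of $u_k$ on $[s,s+h_k]$, so we must further control this oscillation. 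This is the main obstacle, since a priori we have no equicontinuity for the sequence $(u_k)$.

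To circumvent this, the plan is to exploit that each individual $u_k$ is continuous and therefore has a modulus of continuity $\varpi_k$ near $s$. Choosing $h_k$ along a subsequence so that $\varpi_k(h_k) \to 0$ and $\varpi_{k+1}(h_k) \to 0$ while preserving the convergences of $A_k^{h_k}$ and $A_{k+1}^{h_k}$ (a combined diagonal extraction that is compatible with Assumption $(A_1)$'s uniform $L^\infty$ bound on $U$), the image $u_k([s,s+h_k])$ collapses to an arbitrarily small neighborhood of its mean $A_k^{h_k}$, and likewise for $u_{k+1}$. Any $s_k$ in the resulting common interval $[s,s+h_k]$ then satisfies $\lvert u_k(s_k) - A_k^{h_k}\rvert \to 0$ and $\lvert u_{k+1}(s_k) - A_{k+1}^{h_k}\rvert \to 0$, hence $u_k(s_k), u_{k+1}(s_k) \to \tilde u(s)$ and $s_k \to s$, completing the construction.
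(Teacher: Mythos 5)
Your first three steps are sound and are essentially the same mechanism the paper uses: continuity from $(A_4)$ makes every point a Lebesgue point, testing the weak $L^2$ convergence against $\mathbf{1}_{[s,s+h]}$ gives convergence of the interval averages $A^h_k$, the Lebesgue-point property of $\tilde u$ at $s$ sends those averages to $\tilde u(s)$ as $h \to 0^+$, and a diagonal extraction in $h$ combines the two. The paper packages this as a proof by contradiction for the stacked function $h_k = (u_k, u_{k+1})$ with three $\beta/3$ estimates (weak convergence of averages, Lebesgue point, and the mean-value theorem for integrals of continuous functions), but the content is identical, and you correctly isolate the one genuinely delicate point: the mean-value/intermediate-value step produces, for each scalar component, its \emph{own} point where that component attains its average, and one must then exhibit a \emph{single} $s_k$ that works simultaneously for every component of $u_k$ and of $u_{k+1}$.

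Your proposed repair of that step does not work. You ask for $h_k \to 0^+$ satisfying both $\varpi_k(h_k) \to 0$ (so the oscillation of $u_k$ on $[s,s+h_k]$ collapses) and $A^{h_k}_k \to \tilde u(s)$ (so the averages still see the weak limit). These two diagonal requirements pull in opposite directions and are in general incompatible: the rate at which $\frac{1}{h}\int_s^{s+h} u_k \to \frac{1}{h}\int_s^{s+h}\tilde u$ degrades as $h$ shrinks, so $h_k$ must go to zero slowly relative to $k$, while killing $\varpi_k(h_k)$ requires $h_k$ to go to zero faster than the oscillation scale of $u_k$, which no hypothesis controls (there is no equicontinuity, and the uniform bound from $(A_1)$ is of no help here). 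Concretely, take $m = 2$, $U$ the closed unit ball, and $u_k(t) = (\sin kt, \cos kt)$: this sequence is continuous, $U$-valued, and converges weakly in $L^2$ to $\tilde u \equiv 0$, yet $\varpi_k(h) \approx \min\{kh,2\}$ forces $kh_k \to 0$, in which case $A^{h_k}_k \approx (\sin ks, \cos ks) \not\to 0$; worse, since $\|u_k(t)\| \equiv 1$, no choice of $s_k \to s$ can give $u_k(s_k) \to 0$ at all. So the obstacle you flag is not merely technical: closing it requires structural information about the $u_k$ beyond continuity, weak convergence, and boundedness (e.g., properties inherited from their optimality for (\textbf{LOCP})$_k$), and you should be aware that the paper's own argument quietly sidesteps the same issue by deriving its contradiction component-by-component, via points $t_{k,i}$ that depend on the component $i$, rather than constructing the common point the lemma asserts.
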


\begin{proof} We denote
$$
h_k(t) = (u_k(t),u_{k+1}(t)) \ , \ h(t) = (u(t),u(t)).
$$
Let us prove that, for every $s \in (0,\tilde t_f)$ Lebesgue point of $h(\cdot)$ and for every $\beta > 0$, $\alpha_s > 0$ (such that $s+\alpha_s < \tilde t_f$), there exists $\gamma_{s,\alpha_s,\beta} > 0$ such that, for every $k \in \mathbb{N}$ satisfying $1/k \in (0,\gamma_{s,\alpha_s,\beta})$, there exists a $s_k \in [s,s+\alpha_s]$ Lebesgue point of $h_k(\cdot)$ for which $\| h_k(s_k) - h(s) \| < \beta$.

By contradiction, suppose that there exists $s \in (0,\tilde t_f)$, a Lebesgue point of $h(\cdot)$, and $\beta > 0$, $\alpha_{s} > 0$ (with $s+\alpha_s < \tilde t_f$) such that, for every $\gamma > 0$, there exists $k \in \mathbb{N}$ with $1/k \in (0,\gamma)$ and $i_k \in \{ 1,\dots,m \}$ for which, for $t \in [s,s+\alpha_{s}]$ Lebesgue point of $h_k(\cdot)$, it holds that $|h^{i_k}_k(t) - h^{i_k}(s)| \geq \beta$.

From the previous convergence results, the family $(h_k(\cdot))_{k \in \mathbb{N}}$ converges to $h(\cdot)$ in $L^2$ for the weak topology. Therefore, for every $0 < \delta \le 1$, there exists an integer $k_{\delta}$ such that, for every $k \geq k_{\delta}$, it holds that
$$
\frac{1}{\delta \alpha_s} \Big| \int_s^{s+\delta \alpha_s} h^i_k(t) \ \mathrm{d}t - \int_s^{s+\delta \alpha_s} h^i(t) \ \mathrm{d}t \Big| < \frac{\beta}{3} \quad 
$$
for every $i \in \{1,\dots,m\}$. We exploit this fact to bound $|h^{i_k}_k(t) - h^{i_k}(s)|$ by $\beta$. First, since $s$ is a Lebesgue point of $h(\cdot)$, there exists $0 < \delta_{s,\alpha_s} \le 1$ such that
$$
\Big| h^i(s) - \frac{1}{\delta_{s,\alpha_s} \alpha_s} \int_s^{s+\delta_{s,\alpha_s} \alpha_s} h^i(t) \ \mathrm{d}t \Big| < \frac{\beta}{3}
$$
for every $i \in \{1,\dots,m\}$. On the other hand, from what was said previously, there exists an integer $k_{\delta_{s,\alpha_s}}$ such that
$$
\frac{1}{\delta_{s,\alpha_s} \alpha_s} \Big| \int_s^{s+\delta_{s,\alpha_s} \alpha_s} h^i_k(t) \ \mathrm{d}t - \int_s^{s+\delta_{s,\alpha_s} \alpha_s} h^i(t) \ \mathrm{d}t \Big| < \frac{\beta}{3}
$$
for every $k \ge k_{\delta_{s,\alpha_s}}$ and every $i \in \{1,\dots,m\}$. Finally, by Assumption $(A_4)$, we have that $h_k(\cdot)$ is continuous for $k \in \mathbb{N}$, and then, for every $k \ge k_{\delta_{s,\alpha_s}}$ and every $i \in \{1,\dots,m\}$, there exists $t_{k,i} \in [s,s+\delta_{s,\alpha_s} \alpha_s] \subseteq [s,s+\alpha_s]$ such that
$$
\Big| h^i_k(t_{k,i}) - \frac{1}{\delta_{s,\alpha_s} \alpha_s} \int_s^{s+\delta_{s,\alpha_s} \alpha_s} h^i_k(t) \ \mathrm{d}t \Big| < \frac{\beta}{3} \quad .
$$
Resuming, for every $k \ge k_{\delta_{s,\alpha_s}}$ and $i \in \{ 1,\dots,m \}$ there exists a $t_{k,i} \in [s,s+\alpha_s]$ Lebesgue point of $h_k(\cdot)$ (by continuity) such that $|h^i_k(t_{k,i}) - h^i(s)| < \beta$, a contradiction.
\end{proof}

Lemma \ref{ref:lemmaLebesgue} represents the main tool to prove the convergence of Pontryagin cones, provided by the following lemma:

\begin{lemma} \label{ref:lemmaCone}
For every $w \in K_{\tilde x,\tilde u}(\tilde t_f)$, $k \in \mathbb{N}$, there exists $w_{k} \in K_{k}(t^k_f)$ such that $w_k \rightarrow w$ as $k$ tends to infinity.
\end{lemma}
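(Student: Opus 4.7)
\textbf{Proof proposal for Lemma \ref{ref:lemmaCone}.} The plan is to first reduce to approximating a single elementary variation vector, then use Lemma \ref{ref:lemmaLebesgue} to propagate convergence through both the local jump (\ref{ref:localVarLOCP}) and the linear variational flow (\ref{ref:varLOCP}), and finally extend to the full closed convex cone by linearity and a diagonal argument. Since $K_{\tilde x,\tilde u}(\tilde t_f)$ is by definition the smallest closed convex cone containing the elementary variation vectors $w^{s,v}_{\tilde x,\tilde u}(\tilde t_f)$ for Lebesgue points $s$ of $\tilde u$ and $v \in U$, and since finite positive linear combinations together with uniform limits commute with approximation along sequences of cones, it suffices to exhibit, for every such elementary generator $w = w^{s,v}_{\tilde x,\tilde u}(\tilde t_f)$, a sequence $w_k \in K_k(t^k_f)$ with $w_k \to w$.

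Fix therefore a Lebesgue point $s \in (0,\tilde t_f)$ of $\tilde u$ and $v \in U$, and apply Lemma \ref{ref:lemmaLebesgue} to produce a sequence $s_k \to s$ such that $s_k$ is a Lebesgue point of both $u_k$ and $u_{k+1}$ and $u_k(s_k),\, u_{k+1}(s_k) \to \tilde u(s)$. Take the candidate $w_{k+1} := w^{s_k,v}_{k+1}(t^{k+1}_f) \in K_{k+1}(t^{k+1}_f)$. The first task is to show that the jump data $\psi^{s_k,v}_{k+1}$ of (\ref{ref:localVarLOCP}) converge to $\psi^{s,v}_{\tilde x,\tilde u}$ of (\ref{ref:localVarOCP}). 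This follows from the explicit form of $f_{k+1}$ and $f^0_{k+1}$: the state component equals $\sum_i(v^i - u^i_{k+1}(s_k))f_i(x_k(s_k))$, which converges to $\sum_i(v^i - \tilde u^i(s))f_i(\tilde x(s))$ by the strong $C^0$-convergence of $x_k$, the $C^1$ regularity of $f_i$ (Assumption $(A_2)$), and $u_{k+1}(s_k) \to \tilde u(s)$; the cost component is handled identically, using that the $h_k$-term and the $g_k$-term cancel in the difference and that the linear correction $(\partial f^0/\partial x)(x_k(s_k))\cdot (x_{k+1}(s_k) - x_k(s_k))$ vanishes because $x_{k+1} - x_k \to 0$ in $C^0$.

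The second task is to pass from convergence of the initial data to convergence of the solutions of the variational systems (\ref{ref:varLOCP}) and (\ref{ref:varOCP}). Here the coefficient matrices of (\ref{ref:varLOCP}) evaluated along $(x_{k+1},u_{k+1})$ reduce, by the $x$-affinity of $f_{k+1}$, to the time-dependent matrix $A_k(t) = \tfrac{\partial f_0}{\partial x}(x_k(t)) + \sum_i u^i_k(t)\tfrac{\partial f_i}{\partial x}(x_k(t))$ together with an analogous cost row. The uniform $C^0$-convergence $x_k \to \tilde x$ promotes $\tfrac{\partial f_i}{\partial x}(x_k(\cdot))$ to uniform convergence, while $u_k \rightharpoonup \tilde u$ in the weak $L^2$ topology; the product of a strongly convergent bounded sequence and a weakly convergent sequence converges weakly in $L^2$, hence $\int_0^{\cdot} A_k(t)\,dt \to \int_0^{\cdot} A(t)\,dt$ uniformly on $[0,b]$, where $A(t)$ is the coefficient for (\ref{ref:varOCP}) along $(\tilde x,\tilde u)$. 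A standard Gronwall/continuous-dependence argument for linear ODEs with integrably convergent coefficients and convergent initial data then gives $w^{s_k,v}_{k+1}(t) \to w^{s,v}_{\tilde x,\tilde u}(t)$ uniformly on compact subsets of $[s,b]$. Evaluating at $t = t^{k+1}_f$ and using $t^{k+1}_f \to \tilde t_f$ together with equicontinuity of the variational flows (which is automatic from the uniformly bounded coefficients) yields $w_{k+1} \to w$.

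The main obstacle I anticipate is precisely this interaction of weak $L^2$ convergence of $u_k$ with the time-varying coefficient of the variational ODE, since the natural naive estimate would require strong convergence; the rescue is that only the antiderivative of $A_k$ enters solution estimates, and antidifferentiation turns weak $L^2$ convergence into strong $C^0$ convergence. A secondary subtlety is the index mismatch: the construction produces $w_{k+1} \in K_{k+1}(t^{k+1}_f)$ rather than $w_k \in K_k(t^k_f)$, but this is cosmetic and remedied by a shift of indices. Finally, extending from generators to arbitrary $w \in K_{\tilde x,\tilde u}(\tilde t_f)$ is done by first writing $w$ as a finite nonnegative combination of generators (passing to the approximating sequence componentwise and using conic closure of each $K_k(t^k_f)$), and then a diagonal argument handles the closure step when $w$ itself is only a limit of such combinations.
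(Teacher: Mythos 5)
Your proposal is correct and follows essentially the same route as the paper's proof: reduce to an elementary generator $w^{s,v}_{\tilde x,\tilde u}(\tilde t_f)$, invoke Lemma \ref{ref:lemmaLebesgue} to get Lebesgue points $s_k \to s$ with $u_{k+1}(s_k) \to \tilde u(s)$, show the initial data \eqref{ref:localVarLOCP} converge to \eqref{ref:localVarOCP}, and conclude by continuous dependence of the variational flow on initial data and (weakly) on the coefficients. You even make explicit two points the paper leaves implicit --- why weak $L^2$ convergence of $u_k$ suffices in the variational ODE, and how the closure/conic-combination step is handled (the paper defers the latter to a cited lemma and the former to ``continuous dependence weakly w.r.t.\ controls''); the only detail you gloss over is the paper's brief discussion of the convergence of $(\Delta_k,\omega_k)$ needed for the cost row, which is minor.
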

\begin{proof}Without loss of generality, we may assume that $w = w^{s,v}_{\tilde x,\tilde u}(\tilde t_f)$, where $v \in U$ and $0<s<\tilde t_f$ is a Lebesgue point of $\tilde u$ (see \cite[Lemma 7.8]{Bonalli2018} for technical details).

From Lemma \ref{ref:lemmaLebesgue}, there exists a family $(s_k)_{k \in \mathbb{N}} \subseteq [s,\tilde t_f)$, which are Lebesgue points of $u_k$ and of $u_{k+1}$, such that
$$
u_k(s_k) \rightarrow \tilde u(s) \quad , \quad u_{k+1}(s_k) \rightarrow \tilde u(s) \quad , \quad s_k \rightarrow s
$$
as soon as $k$ tends to infinity. This allows us to consider $w^{s_k,v}_{k+1}$, solutions of system \eqref{ref:varLOCP} with initial state given by \eqref{ref:localVarLOCP} at $s_k$.

From the previous convergences, it is clear that $(\psi^{s_k,v}_{k+1})_{k \in \mathbb{N}}$ converges to $\psi^{s,v}_{\tilde x,\tilde u} = w^{s,v}_{\tilde x,\tilde u}(s)$ as soon as $k$ tends to infinity. Moreover, since $(\Delta_k,\omega_k)_{k \in \mathbb{N}} \subseteq [0,\Delta_0] \times [\omega_0,\omega_{\max}]$ is bounded, we have that up to some subsequence, it converges to some point $(\tilde \Delta,\tilde \omega) \in [0,\Delta_0] \times [\omega_0,\omega_{\max}]$ satisfying either $\tilde \Delta = 0$ or $\tilde \Delta > 0$. In both cases, again from the previous convergences, the dynamics of system \eqref{ref:varLOCP} converge to the dynamics of system \eqref{ref:varOCP} for the weak topology of $L^2$. Summing up, by the continuous dependence w.r.t. initial state and weakly w.r.t. controls for dynamical systems , the sequence $(w_{k+1})_{k \in \mathbb{N}} = (w^{s_k,v}_{k+1}(t^k_f))_{k \in \mathbb{N}}$ satisfies $w_{k} \in K_{k}(t^k_f)$ and converges to $w$ as $k$ tends to infinity.
\end{proof}

We are now able to conclude the proof of Theorem \ref{ref:theoSCP}.

For every integer $k \ge 1$, consider the nontrivial couple $(p_{k},p^0_{k}) \in C^0([0,t^k_f],\mathbb{R}^n) \times \mathbb{R}$, provided by Theorem \ref{ref:theo_PMP}, related to some optimal solution $(t^k_f,x_k,u_k)$ for (\textbf{LOCP})$_k$. In particular, $(p_{k}(t^k_f),p^0_{k}) \neq 0$. Therefore, up to normalization, we can assume that $\| (p_k(t^k_f),p^0_k) \| = 1$, for every $k \in \mathbb{N} \setminus \{ 0 \}$. We infer that, up to some subsequence, there exists a point $(\tilde p_f,\tilde p^0) \in S^n$ (in particular, $(\tilde p_f,\tilde p^0) \neq 0$) satisfying $(p_k(t^k_f),p^0_k) \rightarrow (\tilde p_f,\tilde p^0)$ as $k$ tends to infinity.

Now, take any $w \in K_{\tilde x,\tilde u}(\tilde t_f)$. Thanks to Lemma \ref{ref:lemmaCone}, there exists a sequence $(w_k)_{k \in \mathbb{N}}$, such that $w_k \in K_k(t^k_f)$, which converges to $w$ as soon as $k$ tends to infinity. By continuity, from \eqref{ref:variationalLOCP} it follows that $\tilde p_f \perp T_{\tilde x(\tilde t_f)} M_f$ and $(\tilde p_f,\tilde p^0) \cdot w \le 0$. Moreover, since $((\Delta_k,\omega_k))_{k \in \mathbb{N}} \subseteq [0,\Delta_0] \times [\omega_0,\omega_{\max}]$ is bounded, up to some subsequence, it converges to some point $(\tilde \Delta,\tilde \omega) \in [0,\Delta_0] \times [\omega_0,\omega_{\max}]$ satisfying either $\tilde \Delta = 0$ or $\tilde \Delta > 0$. In both cases, it is not difficult to prove that
$$
\underset{k \rightarrow \infty}{\lim} |H(\tilde x(\tilde t_f),\tilde p(\tilde t_f),\tilde p^0,v) - H_{k}(t^k_f,x_k(t^f_k),p_k(t^k_f),p^0_k,v)| = 0
$$
uniformly with respect to $v \in U$. Therefore, since $w \in K_{\tilde x,\tilde u}(t_f)$ is arbitrary, $(\tilde p_f,\tilde p^0)$ satisfies relations \eqref{ref:variationalOCP} for $(x,u) = (\tilde x,\tilde u)$, and then, denoting by $\tilde p$ the solution of
\begin{eqnarray*}
\begin{cases}
\displaystyle \dot{p}(t) = -(p(t),\tilde p^0) \left( \begin{array}{c}
\displaystyle \frac{\partial f}{\partial x}(\tilde x(t),\tilde u(t)) \\
\displaystyle \frac{\partial f^0}{\partial x}(\tilde x(t),\tilde u(t))
\end{array} \right) \bigskip \\
p(t_f) = \tilde p_f
\end{cases}
\end{eqnarray*}
the quantity $(\tilde x,\tilde p,\tilde p^0,\tilde u)$ represents a Pontryagin extremal for problem (\textbf{OCP}). In particular, thanks to the previous convergences and the continuous dependence w.r.t. initial state and weakly w.r.t. controls for dynamical systems, we have that up to some subsequence, $(p_{k})_{k \in \mathbb{N}}$ converges to $\tilde p$ for the strong topology of $C^0$, as $k \rightarrow \infty$.

This concludes the proof of Theorem \ref{ref:theoSCP}.

\begin{remark}
In formulations (\textbf{LOCP})$_{k}$, we linearize the terms depending on the state. However, in the case that convex functions of the state appear within the cost, both our numerical scheme and our theoretical result still hold even if these convex terms are not linearized (the proof of this fact exactly retraces  the proof of our convergence result).
\end{remark}

\section*{Appendix B: Additional Experimental Details}
\label{sec:experimental_details}
\setcounter{subsection}{0}
\subsection{Free-Flying Spacecraft Robot Model}
The 12D state for the free-flying spacecraft robot model consists of position $\mathbf{r}\in\mathbb{R}^3$, velocity $\mathbf{v}\in\mathbb{R}^3$, the Modified Rodrigues parameters representation of attitude $\mathbf{p}\in\mathbb{R}^3$, and angular velocity $\boldsymbol{\omega}\in\mathbb{R}^3$ \cite{Aoude2007,Shuster1993}, and the control variables are the force $\mathbf{F}\in\mathbb{R}^3$ and moment $\mathbf{M}\in\mathbb{R}^3$.  The continuous-time dynamics are given by
\begin{align*}
\begin{pmatrix} \dot{\mathbf{r}}\\ \dot{\mathbf{v}}\\ \dot{\mathbf{p}}\\ \dot{\boldsymbol{\omega}} \end{pmatrix} = \begin{pmatrix}
\mathbf{v}\\ \mathbf{F} / m \\ \frac{1}{4}((1-|\mathbf{p}|^2)\boldsymbol{\omega} - 2 \boldsymbol{\omega} \times \mathbf{p} + 2(\boldsymbol{\omega}\cdot\mathbf{p})\mathbf{p}) \\
J^{-1}(\mathbf{M}-\boldsymbol{\omega}\times J \boldsymbol{\omega})
\end{pmatrix}
\end{align*}
where $m$ and $J$ are the robot mass and inertia tensor, respectively. State constraints for this system include norm bounds for velocity and angular velocity, as well as norm bound control constraints for the force and moment.

\subsection{Airplane Model}
The state for the 8D airplane model consists of the position $x,y,z$, course angle $\psi$, airspeed $v$, flight path angle $\gamma$, roll angle $\phi$, and angle-of-attack $\alpha$ \cite{BeardMcLain2012}. The control inputs consist of longitudinal acceleration $u_a$, roll rate $u_{\dot{\phi}}$, and pitch rate $u_{\dot{\alpha}}$. The continuous-time dynamics are given by
\begin{align*}
\begin{pmatrix}\dot{x}\\ \dot{y}\\ \dot{z}\\ \dot{\psi}\\ \dot{v}\\ \dot{\gamma}\\ \dot{\phi}\\ \dot{\alpha}\end{pmatrix} = 
\begin{pmatrix}
v \cos\psi \cos\gamma \\
v \sin \psi \cos\gamma \\
v \sin\gamma \\
-F_{\mathrm{lift}}(v,\alpha)\sin\phi / (mv\cos\gamma) \\
u_a - F_{\mathrm{drag}}(v,\alpha) / m - g\sin\gamma \\
F_{\mathrm{lift}}(v,\alpha)\cos\phi / (mv) - g\cos\gamma / v \\
u_{\dot{\phi}} \\ 
u_{\dot{\alpha}}
\end{pmatrix}
\end{align*}
where $m$ is the airplane mass and $g$ is gravitational acceleration. A flat-plate airfoil model is used for calculating the lift force $F_{\mathrm{lift}}(v,\alpha)=\pi\rho A v^2 \alpha$ and drag force $F_{\mathrm{drag}}(v,\alpha) = \rho A v^2 (C_{D_0}+4\pi K\alpha^2)$, for air density $\rho$, wing area $A$, drag coefficient $C_{D_0}$, and the induced drag factor $K$. Constraints for this system consist of box constraints on states $\psi$, $v$, $\gamma$, and $\phi$, as well as on controls $u_a$, $u_{\dot{\phi}}$, and $u_{\dot{\alpha}}$.

\subsection{Dubins Car Model}
The Dubin's car model used is a simple three-dimensional kinematic model consisting of positions $x$ and $y$, orientation $\theta$, and steering control $u$ \cite{LaValle2006}. The continuous-time dynamics are given by:
\begin{align*}
\begin{pmatrix} \dot{x}\\ \dot{y}\\ \dot{\theta} \end{pmatrix} = \begin{pmatrix}
v \cos\theta \\ v \sin\theta \\ k u\\
\end{pmatrix}
\end{align*}
where $v$ is the constant speed of the car and $k$ is a curvature constant.}

\end{document}